\def\a{\mathbf{a}}
\def\c{\mathbf{c}}
\def\b{\mathbf{b}}
\def\f{\mathbf{f}}
\def\m{\mathbf{m}}
\def\etta{\boldsymbol{\eta}}
\def\llambda{\boldsymbol{\lambda}}
\def\N{\mathbb{N}}
\def\C{\mathbb{C}}
\newtheorem{theorem}{\hspace*{\parindent}Theorem}
\newtheorem{lemma}{\hspace*{\parindent}Lemma}
\newtheorem{corollary}{\hspace*{\parindent}Corollary}
\newcounter{remark}
\newcommand{\remark}{%
  \refstepcounter{remark}%
  \par\noindent\textbf{Remark \theremark.}\ }
\title{Polynomial perturbations of Euler's and Clausen's identities}
\author{Dmitrii Karp$^{\rm a,b}$\footnote{E-mail: D. Karp -- \emph{dimkrp@gmail.com}}
	\\[10pt]
	\small{\textit{$\phantom{1}^a$Holon Institute of Technology, Holon, Israel}}
    \\[5pt]
	\small{\textit{$\phantom{1}^b$Institute of Mathematics and Informatics, BAS, Sofia, Bulgaria}}
}
\date{{\it Dedicated to the memory of Allen Miller and Richard Paris}}
\begin{document}
	\maketitle
	
	
\begin{abstract}
	A product of two hypergeometric series is generally not hypergeometric. However, there are a few cases when such product does reduce to a single hypergeometric series. The oldest result of this type, beyond the obvious $(1-x)^{a}(1-x)^{b}=(1-x)^{a+b}$, is Euler's transformation for the Gauss hypergeometric function ${}_2F_1$. Another important one is the celebrated Clausen's identity dated 1828  which expresses the square of a suitable ${}_2F_1$ function as a single ${}_3F_2$.  By equating coefficients each product identity corresponds to a special type of summation theorem for terminating series. Over the last two decades Euler's transformations and many summation theorems have been extended by introducing additional parameter pairs differing by positive integers. This amounts to multiplication of the power series coefficients by values of a fixed polynomial at nonnegative integers.  The main goal of this paper is to present an extension of Clausen's identity obtained by such polynomial perturbation.  To this end, we first reconsider the polynomial perturbations of Euler's transformations found by Miller and Paris around 2010. We propose new, simplified proofs of their transformations relating them to polynomial interpolation and exhibiting various new forms of the characteristic polynomials. We further introduce the notion of the Miller-Paris operators which play a prominent role in the construction of the extended Clausen's identity. 
	\end{abstract}
	
	\bigskip
	
	Keywords: \emph{generalized hypergeometric function, Euler-Pfaff transformations, Miller-Paris transformations, Clausen's identity, product formula, polynomial perturbation}
	
	\bigskip
	
	MSC2020: 33C20, 33C05
	
	\bigskip
	
\section{Introduction and preliminaries}
A series $\sum_{k}c_k$ is called hypergeometric if $c_{k+1}/c_{k}$ is a rational function of $k$.  Suppose $R(t)$ is another rational function.  Then, clearly, the series  $\sum_{k}R(k)c_k$ remains hypergeometric and it is natural to call it a {\it rational perturbation} of the original series $\sum_{k}c_k$. In particular, when $R(t)=P(t)$ reduces to a polynomial of some fixed  positive degree $d$, we will call $\sum_{k}P(k)c_k$ a {\it polynomial perturbation} of the original series $\sum_{k}c_k$.  If one knows the zeros of $P$, say $\llambda=(\lambda_1,\ldots,\lambda_d)$, then
$$
P(k)=P(0)\frac{(k-\lambda_1)\cdots(k-\lambda_d)}{(-\lambda_1)\cdots(-\lambda_d)}=P(0)\frac{(1-\lambda_1)_{k}\cdots(1-\lambda_d)_k}{(-\lambda_1)_k\cdots(-\lambda_d)_k},
$$
where the standard notation $(a)_k=\Gamma(a+k)/\Gamma(a)$ for the rising factorial (or Pochhammer's symbol) has been used. We will assume that $P(0)\ne0$ throughout the paper. This entails no loss of generality as if $P(0)=0$ we can start summation from $k=1$ and replace the index of summation $k-1\to k$.  Then, assuming that the original series is written in the standard form
$$
\sum_{k}c_k=\sum_{k=0}^{\infty}\frac{(\a)_{k}}{(\b)_kk!}x^k=
{}_{p}F_{q}\!\left(\left.\begin{matrix}\a\\\b\end{matrix}\:\right\vert x\right),
$$
where from here onward we will use the shorthand notation for the product $(\a)_{k}=(a_1)_{k}\cdots(a_p)_{k}$,
we obtain ($\llambda+\alpha$ is understood element-wise for a scalar $\alpha$)
$$
\sum_{k}P(k)c_k=\sum_{k=0}^{\infty}\frac{(\a)_{k}}{(\b)_kk!}P(k)x^k=
P(0){}_{p+d}F_{q+d}\!\left(\left.\begin{matrix}\a,1-\llambda\\\b,-\llambda\end{matrix}\:\right\vert x\right).  
$$
The function on the right hand side is frequently called hypergeometric function with integral parameter differences following the work of Karsslon \cite{Karlsson}. The acronym IPD (and $q$-IPD for its $q$ extension) was coined by Michael Schlosser \cite{Schlosser} to describe a hypergeometric function containing a group of upper/lower parameter pairs differing by positive integers. The above definition, however, only involves the values of the polynomial $P$ at nonnegative integers and does not assume the knowledge of its (usually unknown) zeros. Hence, the following notation for the rational/polynomial perturbation turns out to be very useful:
$$
\sum_{k=0}^{\infty}\frac{(\a)_{k}}{(\b)_kk!}R(k)x^k=F\!\left(\begin{matrix}\a\\\b\end{matrix}\,\bigg\vert\,R\,\bigg\vert\, x\right), 
$$
so that 
$$
F\!\left(\begin{matrix}\a\\\b\end{matrix}\:\bigg\vert P\:\bigg\vert x\right)=P(0){}_{p+d}F_{q+d}\!\left(\left.\begin{matrix}\a,1-\llambda\\\b,-\llambda\end{matrix}\:\right\vert x\right).
$$
In case of rational perturbation $R=P/Q$ we can also factor the denominator polynomial $Q$ resulting in
$$
F\!\left(\begin{matrix}\a\\\b\end{matrix}\:\bigg\vert R\:\bigg\vert x\right)
=R(0){}_{p+d+r}F_{q+d+r}\!\left(\left.\begin{matrix}\a,1-\llambda,-\etta\\\b,-\llambda,1-\etta\end{matrix}\:\right\vert x\right),
$$
where $\etta=(\eta_1,\ldots,\eta_r)$ are the zeros of $Q$. 

We note that the contiguous hypergeometric series ${}_pF_{q}(\a+\m_1;\b+\m_2;x)$, where $\m_1$, $\m_2$ are integer vectors,  can be viewed  as a rational perturbation of the original series  ${}_pF_{q}(\a;\b;x)$. Indeed, replacing a numerator parameter $a$ by $a+1$ amounts to multiplication of each term by the linear function $k\to (a+k)/a$ and this extends to larger shifts in an obvious way. Going from $a$ to $a-1$ amounts to division by a linear function. 

 Probably the first important example of a polynomially perturbed summation formula is the Karlsson-Minton summation theorem \cite{Karlsson,Minton}, which we write in the form
$$
F\!\left(\begin{matrix}a,b\\b+1\end{matrix}\:\bigg\vert\, P\,\bigg\vert\,1\right)=\frac{\Gamma(b+1)\Gamma(1-a)}
{\Gamma(b+1-a)}P(-b).
$$
This formula can be viewed as a polynomial perturbation of a particular case of the Gauss summation theorem to which it reduces when $P\equiv1$. The polynomial $P$ is usually written in terms of its negated zeros as follows:
$$
P(t)=\frac{(f_1+t)_{m_1}\cdots(f_r+t)_{m_r}}{(f_1)_{m_1}\cdots(f_r)_{m_r}},
$$
so that its degree is $m_1+m_2+\cdots+m_r$ and it is normalized by $P(0)=1$.  However, this particular way of writing $P$ is not important for the validity of the Karlsson-Minton summation theorem. A salient property of this theorem is that the same polynomial $P$ appears on both the left and the right hand sides. A characteristic feature of many hypergeometric identities discovered over last two decades is that a polynomial perturbation of the left hand side leads to a polynomial perturbation of the right hand side but with \emph{a different polynomial}. 
See our paper \cite{KPITSF2018} for extensions of the Karlsson-Minton theorem. 

A next significant development emerged through a series of contributions by various authors in the first decade of the 21st century culminating in the seminal paper by Miller and Paris \cite{MP2013}.  This work provides explicit formulas for the polynomial perturbations of the first and second Euler's (or Euler-Pfaff's) transformations for the Gauss hypergeometric function ${}_2F_1$ \cite[Theorem~2.2.5]{AAR}. As consequences the authors derive polynomial perturbations of Kummer's transformation for the confluent hypergeometric function ${}_1F_1$ and of some quadratic transformations for the Gauss function. Degenerate cases of Miller-Paris transformations have been considered by us in \cite{KPResults2019,KPChapter2020}. Further results for polynomial perturbations of the quadratic transformations were established by Xiaoxia Wang and Arjun Rathie  in \cite{WangRathie2013} and by Robert Maier in \cite{Maier2019}.  Polynomial perturbations have also been found for the classical transformations and summation formulas at fixed argument (typically $1$).  For example, Saalsch\"{u}tz's theorem was extended by Kim, Rathie and Paris in \cite{KRP2013}, while Whipple's and Dougall's identities have been extended by Srivastava, Vyas and Fatawat in \cite{SVF2019} and  Mishev in \cite{Mishev2022} (see also section~3 of this paper).  Numerous further examples of polynomial perturbations of summation and transformation identities at fixed argument can be found in our paper \cite{KPSymmetry2022}.   While  \cite{SVF2019} and \cite{Mishev2022} only involve linear and quadratic perturbations an important work \cite{VF2022} by Yashoverdhan Vyas and Kaplana Fatawat presents perturbations of the classical transformations and summation theorems by arbitrary polynomials - the results to play an important role in this paper.    

The main goal of this paper is to derive a polynomial perturbation of the celebrated 1828 Clausen's product identity, given by  \cite[p.116, 180]{AAR}: 
\begin{equation}\label{eq:Clausen}
\Bigg[{}_{2}F_{1}\!\left(\begin{matrix}c,d\\c+d+1/2\end{matrix}\,\bigg\vert\,x\right)\Bigg]^2={}_{3}F_{2}\!\left(\begin{matrix}2c,2d,c+d\\c+d+1/2,2c+2d\end{matrix}\,\bigg\vert\,x\right).   
\end{equation}
This identity played a crucial role in Ramanujan's derivation of some of his series for $\pi$.  A nice overview of its proof and relation to Ramanujan series can be found in Richard Askey's paper \cite{Askey1989}.  In fact, Askey makes one further step and finds an identity which can be viewed as quadratic perturbation of \eqref{eq:Clausen}, see \cite[(3.7)]{Askey1989} . More precisely, the right hand side of his formula is a quadratic perturbation of the right hand side of \eqref{eq:Clausen}, while the left hand side is contiguous to the left hand side of \eqref{eq:Clausen}.  Analogues of Clausen's identity of importance in number theory were discovered by Heng Huat Chan, Yoshio Tanigawa, Yifan Yang and Wadim Zudilin in \cite{CTYZ2011} using the theory of modular forms. Further investigations along these lines can be found in \cite{ASZ2011}.  In the same year 2011 as the above two references, another type of generalization of Clausen's identity \eqref{eq:Clausen} was published by Raimundas Vidunas in \cite{Vidunas2011}.  His generalizations are in terms of two-variate hypergeometric series and the bottom parameter of the ${}_2F_1$ function on the left hand side is unrestricted. 

In order to reach our goal of finding a polynomial perturbation of \eqref{eq:Clausen} we first revisit the perturbations of Euler-Pfaff's transformations discovered by Allen Miller and Richard Paris. Namely, we will present new and straightforward proofs for the Miller-Paris transformations and relate them to Newton and Lagrange interpolating polynomials. This will yield several new ways of writing the characteristic polynomials driving these transformations. Note that in our work \cite{KPChapter2020} we already found an alternative way to obtain Miller-Paris transformations and new forms of the characteristic polynomials. Those, however, are entirely different from what follows in this paper. We introduce a concept of the first and second Miller-Paris operators which will play a crucial role in generalizing Clausen's formula. This is done in the succeeding Section~2.   In Section~3,  we re-derive the formulas due to Vyas and Fatawat from \cite{VF2022} to reveal explicitly the underlying characteristic polynomials somewhat hidden in their work.  These formulas are then used to extend Clausen's identity as stated in Theorem~\ref{th:ClausenPerturbed}.  In the two corollaries of this theorem we give explicit forms for the linear and quadratic perturbations.  In the final Section~4, we present remarks on open problems related to the new identity and outline potential directions for future work.
	
	\bigskip
	\bigskip
	
	\section{Polynomial perturbations of Euler's transformations}
	We will need a particular form of the Lagrange and Newton interpolating polynomials for the nodes of interpolation at $0,1,\ldots,m$. They are  given in the following lemma.
	\begin{lemma}\label{lm:interpolation}
		Given any real numbers $a_0,\ldots,a_m$ the polynomial $P_{m}$ of degree $m$ taking the values $P_{m}(j)=a_j$, $j=0,\ldots,m$, is given by \emph{(}Lagrange's form\emph{)}
	\begin{equation}\label{eq:LagrangeP}
			P_{m}(t)=\frac{(-1)^m}{m!}\sum\limits_{k=0}^{m}\binom{m}{k}(-t)_k(t-m)_{m-k}a_k=\frac{(-t)_{m+1}}{m!}\sum\limits_{k=0}^{m}\binom{m}{k}\frac{(-1)^ka_k}{k-t},
		\end{equation}
		or \emph{(}Newton's form\emph{)}
	\begin{equation}\label{eq:NewtonP}
			P_{m}(t)=\sum\limits_{k=0}^{m}\frac{(-t)_{k}}{k!}\sum\limits_{j=0}^{k}(-1)^j\binom{k}{j}a_j.
		\end{equation}
	\end{lemma}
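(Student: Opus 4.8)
The plan is to prove both interpolation formulas directly, since the lemma just asserts two explicit closed forms for the unique degree-$m$ interpolating polynomial. I would first establish that any polynomial satisfying the $m+1$ conditions $P_m(j)=a_j$ for $j=0,\ldots,m$ is unique among polynomials of degree at most $m$ (a standard Vandermonde argument), so it suffices to verify that each candidate expression indeed reproduces the prescribed values at the nodes and has degree at most $m$.

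For Newton's form \eqref{eq:NewtonP}, the natural approach is to recognize $\sum_{j=0}^{k}(-1)^j\binom{k}{j}a_j=(-1)^k\Delta^k a_0$ as the $k$-th forward difference of the sequence $(a_j)$ at $0$, and to note that $(-t)_k/k!=(-1)^k\binom{t}{k}$. This rewrites \eqref{eq:NewtonP} as the classical Newton forward-difference formula $P_m(t)=\sum_{k=0}^{m}\binom{t}{k}\Delta^k a_0$. I would then verify $P_m(j)=a_j$ by substituting $t=j$ and using $\binom{j}{k}=0$ for $k>j$ together with the standard identity $\sum_{k=0}^{j}\binom{j}{k}\Delta^k a_0=a_j$; this last identity follows by induction on $j$ from the defining recursion $\Delta^k a_0=\Delta^{k-1}a_1-\Delta^{k-1}a_0$, or equivalently by inverting the finite-difference relation. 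Degree at most $m$ is clear since $(-t)_k$ has degree $k\le m$. This part is essentially the textbook derivation and should present no obstacle.

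For Lagrange's form, I would start from the standard Lagrange interpolation formula $P_m(t)=\sum_{k=0}^{m}a_k L_k(t)$ with basis polynomials $L_k(t)=\prod_{i\neq k}(t-i)/(k-i)$ for nodes $0,1,\ldots,m$. The main computation is to identify each $L_k$ with the coefficient appearing in \eqref{eq:LagrangeP}. I would evaluate the product $\prod_{i\neq k}(t-i)$ by splitting it at $i=k$: the factors with $i<k$ give $(-t)_k/(-1)^k$ up to reindexing, while the factors with $i>k$ give $(t-m)_{m-k}$ read from $i=m$ downward, and the denominator $\prod_{i\neq k}(k-i)=(-1)^{m-k}k!(m-k)!$ combines with $\binom{m}{k}=m!/(k!(m-k)!)$ and the prefactor $(-1)^m/m!$ to produce exactly $\binom{m}{k}(-t)_k(t-m)_{m-k}$. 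The second equality in \eqref{eq:LagrangeP} follows by factoring $(-t)_{m+1}=(-t)_k\cdot(k-t)\cdot(t-m)_{m-k}$ and observing $(-t)_k(t-m)_{m-k}=(-t)_{m+1}/(k-t)$.

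The only genuinely delicate point, and the step I would watch most carefully, is bookkeeping the signs and Pochhammer reindexing in the Lagrange computation: correctly expressing the ascending product $\prod_{i<k}(t-i)$ as a Pochhammer symbol $(-t)_k$ up to a sign, the descending product $\prod_{i>k}(t-i)$ as $(t-m)_{m-k}$, and reconciling the $(-1)$ powers so that the two displayed forms in \eqref{eq:LagrangeP} match exactly. Everything else reduces to the uniqueness of polynomial interpolation and routine binomial manipulation.
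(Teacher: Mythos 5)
Your overall strategy is sound but genuinely different from the paper's. The paper proves the Lagrange form \eqref{eq:LagrangeP} by direct evaluation at the nodes: at $t=j$ every term with $k\neq j$ vanishes, since $(-j)_k=0$ when $j<k$ and $(j-m)_{m-k}=0$ when $j>k$, and the surviving term equals $a_j$ because $(-j)_j(j-m)_{m-j}=(-1)^m j!\,(m-j)!$; Newton's form is simply cited as classical. You instead invoke uniqueness of the interpolant and reconstruct both expressions from scratch, identifying the standard Lagrange basis polynomials $L_k$ with the coefficients in \eqref{eq:LagrangeP} and proving Newton's formula via the forward-difference calculus. Your route is longer but more self-contained; the paper's is a two-line verification. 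Your Newton argument and your computation of $L_k(t)=\frac{(-1)^m}{m!}\binom{m}{k}(-t)_k(t-m)_{m-k}$ are both correct.

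However, the step you yourself flagged as delicate does contain a sign error. The factorization $(-t)_{m+1}=(-t)_k\cdot(k-t)\cdot(t-m)_{m-k}$ is false. Writing $(-t)_{m+1}=\prod_{i=0}^{m}(i-t)$ and noting $(t-m)_{m-k}=\prod_{i=k+1}^{m}(t-i)=(-1)^{m-k}\prod_{i=k+1}^{m}(i-t)$, the correct identity is
\begin{equation*}
(-t)_{m+1}=(-1)^{m-k}\,(-t)_k\,(k-t)\,(t-m)_{m-k},
\qquad\text{equivalently}\qquad
(-t)_k(t-m)_{m-k}=\frac{(-1)^{m-k}(-t)_{m+1}}{k-t}.
\end{equation*}
Substituting your version into the first form of \eqref{eq:LagrangeP} would produce $\frac{(-t)_{m+1}}{m!}\sum_{k}\binom{m}{k}\frac{(-1)^m a_k}{k-t}$, i.e.\ a constant sign $(-1)^m$ in place of the alternating $(-1)^k$, which is not the stated second form and is wrong as a polynomial identity. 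With the corrected factor the signs combine as $(-1)^m\cdot(-1)^{m-k}=(-1)^k$ and the second equality in \eqref{eq:LagrangeP} follows exactly. This is a one-line fix, but as written that derivation fails.
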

	\begin{proof} Indeed, $(-j)_k=0$ for $j=0,1,\ldots,k-1$ and $(j-m)_{m-k}=0$ for $j=k+1,\ldots, m$. Hence,
	$$
	P_{m}(j)=\frac{(-1)^m}{m!}\binom{m}{j}(-j)_j(j-m)_{m-j}a_j=a_j.
	$$
    Formula \eqref{eq:NewtonP} is the classical Newton's form.
\end{proof}	
	
	\subsection{The first Miller-Paris transformation}\label{sbsc:MP1}
	
	For any $\alpha$ and any numerical sequence $a_k$ Leonard Euler found the following power series transformation understood either formally or, in case of convergence, as equality of functions:
	$$
\sum\limits_{k=0}^{\infty}\frac{(\alpha)_{k}}{k!}a_kx^k=(1-x)^{-\alpha}\sum\limits_{k=0}^{\infty}
	\frac{(\alpha)_{k}}{k!}\Delta^k{a_0}\:\Big(\frac{x}{1-x}\Big)^k,
$$
	where the forward difference is defined by  $\Delta{a_j}=a_{j+1}-a_{j}$, $\Delta^{k}{a_j}:=\Delta(\Delta^{k-1}{a_{j}})$. Taking 
$$
	a_k=\frac{(\a)_k}{(\b)_k}:=\frac{(a_1)_{k}\cdots(a_p)_{k}}{(b_1)_{k}\cdots(b_p)_{k}}
$$
it is easy to verify that 
$$
	\Delta^k{a_0}=(-1)^k{}_{p+1}F_{p}\!\left(\begin{matrix}-k,\a\\\b\end{matrix}\right):=(-1)^k{}_{p+1}F_{p}\!\left(\begin{matrix}-k,\a\\\b\end{matrix}\,\bigg\vert\,1\right)
$$
(the argument $1$ will be omitted from here on).  This leads to N{\o}rlund's formula \cite[(1.21)]{Norlund} 
\begin{equation}\label{eq:Norlund1.1.21}
		{}_{p+1}F_{p}\!\left(\left.\begin{matrix}\alpha,\a\\\b\end{matrix}\:\right\vert x\right)=(1-x)^{-\alpha}\sum\limits_{k=0}^{\infty}\frac{(\alpha)_{k}}{k!}{}_{p+1}F_{p}\!\left(\begin{matrix}-k,\a\\\b\end{matrix}\right)
		\Big(\frac{x}{x-1}\Big)^k.
	\end{equation}
	We will apply this relation  to a particular case of the generalized hypergeometric function given by
\begin{equation}\label{eq:rFrFm}
{}_{r+1}F_{r}\!\left(\begin{matrix}a,b,\f+\m\\c,\f\end{matrix}\:\Big\vert x\right)
=\sum\limits_{k=0}^{\infty}\frac{(a)_k(b)_k}{(c)_kk!}F_m(k)x^k=:F\!\left(\begin{matrix}a,b\\c\end{matrix}\:\Big\vert\, F_m\,\Big\vert\,x\right),
\end{equation}
	where $\f=(f_1,\ldots,f_r)\in(\C\setminus{-\N_0})^{r}$, $\m=(m_1,\ldots,m_r)\in\N^r$, and $F_{m}(k)$ is a polynomial of degree $m=m_1+m_2+\cdots+m_r$ defined by 
	\begin{equation}\label{eq:Fm-defined}
		F_m(k)=\frac{(\f+\m)_{k}}{(\f)_{k}}=\frac{(\f+k)_{\m}}{(\f)_{\m}}=\frac{1}{(\f)_{\m}}\sum_{j=0}^{m}\sigma_j(\f,\m)k^{j},
	\end{equation}
    where from here onward we will use the shorthand notation $(\f)_{\m}=(f_1)_{m_1}\cdots(f_r)_{m_r}$ and $(\f+k)=(f_1+k,f_2+k,\ldots,f_r+k)$ and similarly for other vectors.
	The ultimate equality in \eqref{eq:Fm-defined} is the definition of the numbers $\sigma_j(\f,\m)$. Assuming that $(c-b-m)_{m}\ne0$ and $b=f_{j}$ for all $j$ and applying  \eqref{eq:Norlund1.1.21} to the function  \eqref{eq:rFrFm} we obtain
\begin{align}\label{eq:MP1general}
F\!\left(\begin{matrix}a,b\\c\end{matrix}\:\bigg\vert\,F_m\,\bigg\vert\, x\right)&\!=\frac{1}{(1-x)^{a}}\sum\limits_{n=0}^{\infty}\frac{(a)_{n}(c-b-m)_n}{(c)_nn!}A_n
=\frac{1}{(1-x)^{a}}F\!\left(\begin{matrix}a,c-b-m\\c\end{matrix}\:\bigg\vert\,Q_m\,\bigg\vert\,\frac{x}{x-1}\right),
		\\\nonumber
		~\text{where}~A_n&=\frac{(c)_n}{(c-b-m)_n}{}_{r+2}F_{r+1}\!\left(\begin{matrix}-n,b,\f+\m\\c,\f\end{matrix}\right)=Q_{m}(n),~n=0,1,\ldots,
\end{align}
	with $Q_m$ defined by 
	$$
	Q_m(t)=\frac{\Gamma(c+t)\Gamma(c-b-m)}{\Gamma(c)\Gamma(c-b-m+t)}{}_{r+2}F_{r+1}\!\left(\begin{matrix}-t,b,\f+\m\\c,\f\end{matrix}\right).
	$$
We will show next that $Q_m(t)$ is, in fact, a polynomial in $t$ of degree $m$. This can be easily seen once the first Miller-Paris transformation is already known, see \cite[Theorem~4]{MP2012}.  It was also proved in \cite[Theorem~3.2]{KPITSF2018}, but we prefer to give an independent and more direct proof here.  Using \eqref{eq:Fm-defined} we get
$$
{}_{r+2}F_{r+1}\!\left(\begin{matrix}t,b,\f+\m\\c,\f\end{matrix}\right)=\frac{1}{(\f)_{\m}}\sum_{j=0}^{m}\sigma_j(\f,\m)\sum\limits_{k=0}^{\infty}\frac{(t)_k(b)_k}{(c)_kk!}k^{j}.
$$
Denote $D=\partial_{x}$. We will need the following easily verifiable expansion of the differential operator $(xD)^n$, 
	$$
(xD)^n=\sum\limits_{i=1}^{n}S(n,i)x^{i}D^{i}
	$$
in terms of Stirling's numbers of the second kind $S(n,i)$ generated by $x^n=\sum_{i=1}^{n}S(n,i)[x]_{i}$, where $[x]_i=x(x-1)\cdots(x-i+1)$ is the falling factorial. Then 
	\begin{multline*}
		(xD)^{j}{}_{2}F_{1}\!\left(\left.\begin{matrix}t,b\\c\end{matrix}\:\right|\:x\right)=\sum\limits_{k=0}^{\infty}\frac{(t)_k(b)_k}{(c)_kk!}k^{j}x^k=\sum\limits_{i=1}^{j}S(j,i)x^{i}D^{i}{}_{2}F_{1}\!\left(\left.\begin{matrix}t,b\\c\end{matrix}\:\right|\:x\right)
		\\
		=
		\sum\limits_{i=1}^{j}S(j,i)x^{i}\frac{(t)_i(b)_i}{(c)_i}{}_{2}F_{1}\!\left(\left.\begin{matrix}t+i,b+i\\c+i\end{matrix}\:\right|\:x\right) .   
	\end{multline*}
	Setting $x=1$ and using the Gauss summation theorem \cite[Theorem~2.2.2]{AAR} we have
$$
\sum\limits_{k=0}^{\infty}\frac{(t)_k(b)_k}{(c)_kk!}k^{j}=\sum\limits_{i=1}^{j}S(j,i)\frac{(t)_i(b)_i}{(c)_i}{}_{2}F_{1}\!\left(\left.\begin{matrix}t+i,b+i\\c+i\end{matrix}\:\right|\:1\right)= 
	\sum\limits_{i=1}^{j}S(j,i)\frac{(t)_i(b)_i}{(c)_i}\frac{\Gamma(c+i)\Gamma(c-t-b-i)}{\Gamma(c-t)\Gamma(c-b)}.
	$$
Substituting this into the definition of $Q_m$ we obtain: 
	\begin{multline*}
		Q_m(-t)=\frac{\Gamma(c-t)\Gamma(c-b-m)}{\Gamma(c)\Gamma(c-b-m-t)}{}_{r+2}F_{r+1}\!\left(\begin{matrix}t,b,\f+\m\\c,\f\end{matrix}\right)
		\\
		=\frac{\Gamma(c-t)\Gamma(c-b-m)}{\Gamma(c)\Gamma(c-b-m-t)(\f)_{\m}}\sum_{j=0}^{m}\sigma_j(\f,\m)\sum\limits_{k=0}^{\infty}\frac{(t)_k(b)_k}{(c)_kk!}k^{j}
		\\
		=\frac{\Gamma(c-t)\Gamma(c-b-m)}{\Gamma(c)\Gamma(c-b-m-t)(\f)_{\m}}\sum_{j=0}^{m}\sigma_j(\f,\m)\sum\limits_{i=1}^{j}S(j,i)\frac{(t)_i(b)_i}{(c)_i}\frac{\Gamma(c+i)\Gamma(c-t-b-i)}{\Gamma(c-t)\Gamma(c-b)}
		\\
		=\frac{(-1)^m}{(1+b-c)_{m}(\f)_{\m}}\sum_{j=0}^{m}\sigma_j(\f,\m)\sum\limits_{i=1}^{j}S(j,i)(t)_i(b)_i(c-t-b-m)_{m-i},
	\end{multline*}
	which is manifestly a polynomial of degree $m$ in $t$.  Hence, $Q_m(t)$ coincides with  interpolating polynomial taking the values $A_n$ at $t=n$ for $n=0,\ldots,m$. We can use an explicit construction from Lemma~\ref{lm:interpolation}, for example:
	\begin{equation}\label{eq:LagrangeQm}
		Q_m(t)=\frac{(-1)^m}{m!}\sum\limits_{k=0}^{m}\binom{m}{k}\frac{(c)_k(-t)_k(t-m)_{m-k}}{(c-b-m)_k}{}_{r+2}F_{r+1}\!\left(\begin{matrix}-k,b,\f+\m\\c,\f\end{matrix}\right).
	\end{equation}
Note that all forms for $Q_m$ provided by Lemma~\ref{lm:interpolation} differ from that of Miller and Paris, which is
\begin{equation}\label{eq:MPQm}
		Q_m(t)=\frac{1}{(c-b-m)_{m}}\sum\limits_{k=0}^{m}\frac{(b)_k(-t)_{k}(c-b-m+t)_{m-k}}{(-1)^kk!}{}_{r+1}F_{r}\!\left(\begin{matrix}-k,\f+\m\\\f\end{matrix}\right),
\end{equation}
as well as from the alternative form given by the author and E.G.\:Prilepkina in \cite[Lemma~2]{KPChapter2020}, namely,
	\begin{equation}\label{eq:KPQm}
		Q_m(t)=\frac{(\f-b)_{\m}}{(\f)_{\m}(c-b-m)_{m}}\sum\limits_{k=0}^{m}\frac{(b)_k(1-c+b)_{k}(c-b-m+t)_{m-k}}{(-1)^kk!}{}_{r+1}F_{r}\!\left(\begin{matrix}-k,1-\f+b\\1-\f+b-\m\end{matrix}\right).
	\end{equation}
Note that both $F_m$ and $Q_m$ are normalized by $Q_m(0)=F_m(0)=1$.
	
	Further, by changing the order of summation in \eqref{eq:MPQm} and application of \cite[3.2(7)]{LukeBookVol1}, it is not hard to express $Q_m$ in terms of values of the polynomial $F_m$ at nonnegative integers as follows 
	\begin{align}\label{eq:MPQmNew}
		Q_m(t)=&\frac{(-1)^m(-t)_{m}(b)_m}{(c-b-m)_{m}}\sum\limits_{k=0}^{m}\frac{(-1)^k}{k!(m-k)!}F_m(k){}_{3}F_{2}\!\left(\begin{matrix}-m+k,1,c-b-m+t\\1-m+t,1-m-b\end{matrix}\right)
		\\
		=&\frac{(-t)_{m}(b)_m}{(c-b-m)_{m}m!}\Delta^m\bigg[F_m(k){}_{3}F_{2}\!\left(\begin{matrix}-m+k,1,c-b-m+t\\1-m+t,1-m-b\end{matrix}\right)\bigg]_{k=0},
	\end{align}
where the action of the operator  $\Delta^m$ is in the variable $k$ and one should set $k=0$ after applying  $\Delta^m$.  Further, by an application of \cite[Appendix (II)]{RJRJR1992} to ${}_3F_2$ on the right hand side we also get 
	\begin{equation}\label{eq:MPQmNewNew}
		Q_m(t)=\frac{(b)_{m}(c)_m}{(1-c+b)_{m}}\sum\limits_{k=0}^{m}\frac{(-1)^k(-t)_k}{k!(m-k)!(c)_k}F_m(k){}_{3}F_{2}\!\left(\begin{matrix}-m+k,1-c-t,-b-m\\1-c-m,1-b-m\end{matrix}\right).
	\end{equation}
	
	The first Miller-Paris transformation \eqref{eq:MP1general}  can be viewed as a mapping from the polynomial $F_m$ to the polynomial $Q_m$.  This way of looking at Miller-Paris transformations clarifies many further considerations, so  we will frequently write 
	\begin{equation}\label{eq:Tm-defined}
		Q_m(t)=	Q_m\!\left(\begin{matrix}b\\c\end{matrix}\:\bigg\vert\,F_m\,\bigg\vert\, t\right)=[T_m(b;c)F_{m}(\cdot)](t)=T_m(b;c)F_{m}(\cdot),
	\end{equation}
where $T_m(b;c)$ will be designated \textit{the first Miller-Paris operator}. The name of the argument of the resulting polynomial $Q_m$ will sometimes be omitted on the right hand side as in the rightmost expression above.  The operator $T_m(b;c)$ depends on the degree $m$ and the two parameters $b$ and $c$ and acts on polynomials according to any of the equivalent formulas \eqref{eq:LagrangeQm}-\eqref{eq:MPQmNewNew}.  Moreover, formulas \eqref{eq:MPQmNew} and \eqref{eq:MPQmNewNew} show that its action is well defined on any finite sequence indexed by $k=0,\ldots,m$, and it is linear for each fixed $m$.  Furthermore,  
    the second application of the transformation \eqref{eq:MP1general} to the right hand side of the same formula with $a$ fixed and $c-b-m$ playing the role of $b$ clearly recovers the left hand side. This implies that the inverse operator to  $T_m(b;c)$ is given by
	\begin{equation}\label{eq:Tm-inverse}
		[T_m(b;c)]^{-1}=T_m(c-b-m;c).
	\end{equation}
This identity written explicitly using \eqref{eq:MPQmNew} or \eqref{eq:MPQmNewNew} represents a combinatorial type inversion formula, which might be new.

\subsection{The second Miller-Paris transformation}\label{sbsc:MP2}

Assume that $(c-b-m)_m\ne0$, $(c-a-m)_m\ne0$ and $(1+a+b-c)_m\ne0$. Second application of the transformation \eqref{eq:MP1general} to its right hand side, this time leaving the parameter $c-b-m$ intact, leads immediately to the following identity  
	\begin{equation}\label{eq:MP2general}
			F\!\left(\begin{matrix}a,b\\c\end{matrix}\,\bigg\vert\,F_m\,\bigg\vert\,x\right)\!=\!{}_{r+2}F_{r+1}\left.\left(\begin{matrix}a, b,\f+\m\\c,\f\end{matrix}\,\right\vert x\right)
		\!=\!(1-x)^{c-a-b-m}F\left(\!\begin{matrix}c-a-m, c-b-m\\c\end{matrix}\,\bigg\vert\,\hat{Q}_m\,\bigg\vert\,x\right),
	\end{equation}
	where the polynomial $\hat{Q}_m$ results from the application of the operator $T_m(a;c)$ defined in \eqref{eq:Tm-defined} to the polynomial $Q_m$, i.e.  $\hat{Q}_m=T_m(a;c)Q_m=T_m(a;c)T_m(b;c)F_m$.  Various explicit forms for $\hat{Q}_m$  can be obtained by combining formulas \eqref{eq:LagrangeQm}-\eqref{eq:MPQmNewNew} when computing the action of the operators $T_m(b;c)$, $T_m(a;c)$.  A simpler expression can be found by using interpolation.  Indeed, multiplying the ${}_{r+2}F_{r+1}$ on the left hand side by $(1-x)^{a+b+m-c}$ expanded in binomial series, by Cauchy product we obtain
\begin{multline}\label{eq:MP2CauchyProduct}
		(1-x)^{a+b+m-c}{}_{r+1}F_{r}\left.\!\!\left(\!\begin{matrix}a, b,\f+\m\\c,\f\end{matrix}\right\vert x\right)
		=\sum\limits_{n=0}^{\infty}x^n\sum\limits_{k=0}^{n}
		\frac{(a)_k(b)_k(\f+\m)_{k}(c-a-b-m)_{n-k}}{(c)_{k}(\f)_kk!(n-k)!}
		\\
		=\sum\limits_{n=0}^{\infty}x^n
		\frac{(c-a-b-m)_{n}}{n!}
		{}_{r+3}F_{r+2}\!\left(\begin{matrix}-n,a,b,\f+\m\\c,1-c+a+b+m-n,\f\end{matrix}\right)
        \\
		=\sum\limits_{n=0}^{\infty}x^n
		\frac{(c-a-m)_{n}(c-b-m)_{n}}{(c)_{n}n!}B_n,
	\end{multline}
	where $(\alpha)_{n-k}=(-1)^k(\alpha)_{n}/(1-\alpha-n)_{k}$ has been used, and we defined 
	\begin{equation}\label{eq:Bn}
		B_n:=\frac{(c-a-b-m)_{n}(c)_{n}}{(c-a-m)_{n}(c-b-m)_{n}}
		{}_{r+3}F_{r+2}\!\left(\begin{matrix}-n,a,b,\f+\m\\c,1-c+a+b+m-n,\f\end{matrix}\right).    
	\end{equation}
	Now, we can apply Lemma~\ref{lm:interpolation} to build a polynomial $P_{m}(t)$ having the values $P_{m}(n)=B_n$ for $n=0,1,\ldots,m$.  However, as both $P_m$ and $\hat{Q}_m$ have degree $m$ and agree at $m+1$ points, they must coincide identically.  For instance, formula \eqref{eq:LagrangeP} yields:
\begin{equation}\label{eq:hatQLagrange}
		\hat{Q}_m(t)=\frac{(-1)^m}{m!}\sum\limits_{k=0}^{m}\binom{m}{k}\frac{(-t)_k(t\!-\!m)_{m-k}(c\!-\!a\!-\!b\!-\!m)_{k}(c)_{k}}{(c-a-m)_{k}(c-b-m)_{k}}
		{}_{r+3}F_{r+2}\!\left(\begin{matrix}-k,a,b,\f+\m\\c,1\!-\!c\!+\!a\!+\!b\!+\!m\!-\!k,\f\end{matrix}\right). 
	\end{equation}
	The form of the polynomial $\hat{Q}_m$ found by Miller and Paris \cite[Theorem~4]{MP2013}  is different, namely,
	\begin{equation}\label{eq:hatQMP}
		\hat{Q}_m(t)=\sum\limits_{k=0}^{m}\frac{(a)_k(b)_k(-t)_k}{k!(c-a-m)_k(c-b-m)_k} {}_{r+1}F_{r}\!\left(\begin{matrix}-k,\f+\m\\\f\end{matrix}\right)
		{}_{3}F_{2}\!\left(\begin{matrix}-m+k,k-t,c-a-b-m\\c-a-m+k,c-b-m+k\end{matrix}\right).
	\end{equation}
	Below will show that this form is a guise of Newton's interpolation formula  \eqref{eq:NewtonP}. 
	
	Now we will give an alternative direct proof of the transformation formula \eqref{eq:MP2general} without resorting to the first Miller-Paris transformation \eqref{eq:MP1general}. To this end we will show that the sequence $B_n$ defined in \eqref{eq:Bn} has vanishing forward differences of all orders exceeding $m$, which implies that it can be interpolated by a polynomial of degree $m$ for all integer $n\ge0$. This is done in the following lemma. 
	\begin{lemma}\label{lm:Bndiff}
		Suppose the sequence $B_n$ is defined by \eqref{eq:Bn}. Then, for all integer $n>m$ we have 
		\begin{equation}\label{eq:nthDiffB_n}
			\Delta^{n}B_0=(-1)^n\sum\limits_{k=0}^{n}(-1)^k\binom{n}{k}B_k=0.
		\end{equation}
	\end{lemma}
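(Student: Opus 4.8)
The plan is to evaluate the alternating binomial sum in \eqref{eq:nthDiffB_n} explicitly and recognise it as the $n$-th forward difference, at $0$, of a polynomial of degree strictly less than $n$. First I would substitute into $\Delta^nB_0=\sum_{k=0}^n(-1)^{n-k}\binom{n}{k}B_k$ the Cauchy-product form of $B_k$ read off from \eqref{eq:MP2CauchyProduct}, namely $B_k=\frac{(c)_k k!}{(c-a-m)_k(c-b-m)_k}\sum_{i=0}^k\frac{(a)_i(b)_i}{(c)_i i!}F_m(i)\frac{(c-a-b-m)_{k-i}}{(k-i)!}$, interchange the two summations so that the difference index $k$ is summed first for fixed Cauchy index $i$ (writing $k=i+\ell$), and reduce the inner $\ell$-sum to a single terminating series. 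Using the elementary identities $\binom{n}{i+\ell}=\frac{(-1)^{i+\ell}}{(i+\ell)!}(-n)_{i+\ell}$, $(-n)_{i+\ell}=(-n)_i(i-n)_\ell$ and $(c-a-m)_i(c-a-m+i)_\ell=(c-a-m)_{i+\ell}$ together with their analogues, the $(i+\ell)!$ cancels the prefactor $k!$ and everything collapses to
\begin{equation*}
\Delta^n B_0=(-1)^n\sum_{i=0}^n\frac{(a)_i(b)_i(-n)_i}{i!\,(c-a-m)_i(c-b-m)_i}\,F_m(i)\;{}_3F_2\!\left(\begin{matrix}i-n,\,c+i,\,c-a-b-m\\ c-a-m+i,\,c-b-m+i\end{matrix};1\right).
\end{equation*}

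Since $(-n)_i/i!=(-1)^i\binom{n}{i}$, the identity will follow once I show that the remaining $i$-dependent factor $\Xi(i):=\frac{(a)_i(b)_i}{(c-a-m)_i(c-b-m)_i}\,{}_3F_2(\cdots)$ is a polynomial in $i$ of degree at most $n-1-m$: for then $\Delta^nB_0=\Delta^n[\,\Xi\cdot F_m\,](0)$, and $\Xi\cdot F_m$ has degree at most $(n-1-m)+m=n-1<n$, so its $n$-th difference vanishes. The decisive case is the difference of order $n=m+1$, because the excess of the lower over the upper parameters of the inner ${}_3F_2$ equals $n-m$, so the series is Saalschützian precisely when $n=m+1$. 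Applying the Pfaff–Saalsch\"utz summation theorem and then telescoping the resulting Pochhammer symbols — via $(c-a-m)_i(c-a-m+i)_{m+1-i}=(c-a-m)_{m+1}$, $(a)_i(-a-m)_{m+1-i}=(-1)^{m+1-i}(a)_{m+1}$, and the matching identities along the $b$- and $c$-chains — forces all dependence on $i$ to cancel, so that $\Xi(i)$ reduces to the constant $(a)_{m+1}(b)_{m+1}/[(c-a-m)_{m+1}(c-b-m)_{m+1}]$. Consequently $\Delta^{m+1}B_0$ equals that constant times $\Delta^{m+1}F_m(0)$, which is $0$ because $\deg F_m=m$; this is exactly the predicted bound $n-1-m=0$ in its sharpest instance.

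The remaining task, and the step I expect to be the main obstacle, is to secure the degree bound on $\Xi$ for every $n>m+1$, where the inner ${}_3F_2$ is no longer balanced (its excess is $n-m>1$) and the transparent Saalsch\"utz collapse is unavailable. Here I would reduce the non-balanced terminating ${}_3F_2$ to the balanced case by a Thomae/Sheppard transformation of terminating series, which expresses it through Saalschützian pieces and exhibits $\Xi$ as a polynomial of the required degree $n-1-m$ in $i$; equivalently, one proves the fixed-order statement $\Delta^{m+1}B_\nu=0$ at every base point $\nu\ge0$ (order $m+1$ keeps the governing sum balanced irrespective of $\nu$, the shift merely adjoining one further integral-parameter-difference pair) and then invokes Newton's interpolation formula from Lemma~\ref{lm:interpolation} to conclude that $B_\nu$ agrees with a polynomial of degree $\le m$ for all $\nu$, whence $\Delta^nB_0=0$ for every $n>m$. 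The delicate point throughout is the Pochhammer bookkeeping that turns the inner terminating series into the binomial weight $(-1)^i\binom{n}{i}$ times a polynomial of degree below the order of differencing.
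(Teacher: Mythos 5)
Your set-up is the same as the paper's own proof. The rearranged identity you display (after factoring out $(-1)^n$) is exactly the paper's intermediate formula
\begin{equation*}
\sum_{k=0}^{n}(-1)^k\binom{n}{k}B_k=\sum_{i=0}^{n}\frac{(-n)_i(a)_i(b)_i}{i!\,(c-a-m)_i(c-b-m)_i}\,F_m(i)\;
{}_{3}F_{2}\!\left(\begin{matrix}i-n,\,c+i,\,c-a-b-m\\ c-a-m+i,\,c-b-m+i\end{matrix}\right),
\end{equation*}
since $(-n)_i/i!=(-1)^i\binom{n}{i}$ and $F_m(i)=(\f+\m)_i/(\f)_i$; and your reduction to the claim that $\Xi(i)$ is a polynomial in $i$ of degree at most $n-m-1$ is also the paper's argument (the paper works with $\Xi\cdot F_m$, a polynomial of degree at most $n-1$, whose $n$-th difference then vanishes). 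Your Pfaff--Saalsch\"{u}tz computation for $n=m+1$ is correct.

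The step you yourself flag as the main obstacle, however, is exactly where the paper's proof does its real work, and your proposal leaves it open. For general $n>m$ the paper applies one specific Sheppard transformation \cite[Appendix, formula (III)]{RJRJR1992}:
\begin{equation*}
{}_{3}F_{2}\!\left(\begin{matrix}i-n,\,c+i,\,c-a-b-m\\ c-a-m+i,\,c-b-m+i\end{matrix}\right)
=\frac{(a)_n(b)_n\,(c-a-m)_i(c-b-m)_i}{(c-a-m)_n(c-b-m)_n\,(a)_i(b)_i}\,
{}_{3}F_{2}\!\left(\begin{matrix}1+m-n,\,i-n,\,c-a-b-m\\ 1-a-n,\,1-b-n\end{matrix}\right),
\end{equation*}
whose prefactor cancels $(a)_i(b)_i/[(c-a-m)_i(c-b-m)_i]$ exactly, leaving $\Xi(i)$ equal to a constant times a terminating series of $n-m$ terms in which $i$ enters only through $(i-n)_\ell$; this makes the bound $\deg\Xi\le n-m-1$ manifest, confirming your conjecture uniformly in $n$. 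Note that this is a single transformation to another terminating ${}_3F_2$, not a decomposition into ``Saalsch\"{u}tzian pieces''; since most Thomae relations for terminating ${}_3F_2$'s do not isolate the $i$-dependence in this way, identifying and verifying this particular one is the actual content of the proof, and without it your argument is incomplete. Your fallback route does not repair this: for base point $\nu>0$ the governing inner sum for $\Delta^{m+1}B_\nu$ is a Saalsch\"{u}tzian ${}_4F_3$ carrying the additional integral-parameter-difference pair $(\nu+1,\,\nu+1-i)$, which the classical Pfaff--Saalsch\"{u}tz theorem cannot sum; one would need the extended Saalsch\"{u}tz theorem of Lemma~\ref{lm:Saal_extended}, which the paper obtains from \eqref{eq:MP2general} itself --- the very transformation this lemma is designed to reprove independently --- so in context that route is circular.
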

\begin{proof} Substituting the definition of $B_k$ from \eqref{eq:Bn}, writing the hypergeometric function as a sum and exchanging the  order of summations after some calculations we will have 
	$$
	\sum\limits_{k=0}^{n}(-1)^k\binom{n}{k}B_k=\sum\limits_{j=0}^{n}\binom{n}{j}\frac{(-1)^j(a)_j(b)_j(\f+\m)_{j}}{(c-a-m)_{j}(c-b-m)_{j}(\f)_{j}}
	{}_{3}F_{2}\!\left(\begin{matrix}-n+j,c+j,c-a-b-m\\c-a-m+j,c-b-m+j\end{matrix}\right),
	$$
	where we applied standard transformations of Pochhammer's symbols $(\alpha)_{k+j}=(\alpha)_{j}(\alpha+j)_{k}$, $(\alpha-j)_{j}=(-1)^{j}(1-\alpha)_{j}$ and $(n-k-j)!=(-1)^k(n-j)!/(-n+j)_{k}$.  Next,  apply one of the Sheppard's transformations as given in \cite[Appendix, formula (III)]{RJRJR1992} to the ${}_3F_2$ on the right hand side. With appropriate change of notation this transfromation is given by
	\begin{align*}
		&{}_{3}F_{2}\!\left(\begin{matrix}-n+j,c+j,c-a-b-m\\c-a-m+j,c-b-m+j\end{matrix}\right)
		\\
		&=
		\frac{(a)_n(b)_n(c-a-m)_{j}(c-b-m)_{j}}{(c-a-m)_{n})(c-b-m)_{n}(a)_{j}(b)_{j}}{}_{3}F_{2}\!\left(\begin{matrix}1+m-n,-n+j,c-a-b-m\\1-a-n,1-b-n\end{matrix}\right).    
	\end{align*}
	Substituting this back into the above formula and using $(\f+\m)_{j}/(\f)_{j}=(\f+j)_{\m}/(\f)_{\m}$ yields
	$$
	\sum\limits_{k=0}^{n}(-1)^k\binom{n}{k}B_k=\frac{(a)_n(b)_n}{(c-a-m)_{n})(c-b-m)_{n}(\f)_{\m}}\sum\limits_{j=0}^{n}(-1)^j\binom{n}{j}G(j),
	$$
	where 
	$$
	G(t)=(\f+t)_{\m}
	\cdot{}_{3}F_{2}\!\left(\begin{matrix}1+m-n,-n+t,c-a-b-m\\1-a-n,1-b-n\end{matrix}\right)
	$$
	is manifestly  a polynomial in $t$ of degree $n-1$.  Hence,
$\Delta^{n}G(0)=0$.
\end{proof}

	This lemma implies that any polynomial that interpolates $B_0, B_1,\ldots,B_m$ (i.e. $P(k)=B_k$ for $k=0,\ldots,m$) actually interpolates $B_k$ for all integers $k\ge0$.  Indeed, suppose we build Newton's interpolating polynomial of degree $N>m$ taking the values $P(k)=B_k$ for $k=0,\ldots,N$ as prescribed by  \eqref{eq:NewtonP}.  Then the inner sum in \eqref{eq:NewtonP} vanishes for $k>m$ by the above lemma and the polynomial has degree $m$.  As this is true for any $N>m$, the conclusion follows. 
	
Hence, we established \eqref{eq:MP2general} with characteristic polynomial $\hat{Q}_m(t)$ given in \eqref{eq:hatQLagrange}.  This does not explain, however, where Miller-Paris' form of  $\hat{Q}_m(t)$ in \eqref{eq:hatQMP} comes from. We will show that, in fact, \eqref{eq:hatQMP} is a guise of Newton's interpolating polynomial \eqref{eq:NewtonP}.
	First, we express $\hat{Q}_m(t)$ in terms of the rising factorial basis $(-t)_n$.  To this end expand ${}_3F_2$ in \eqref{eq:hatQMP} and exchange the order of summations using $(\alpha)_{k}(\alpha+k)_{j}=(\alpha)_{k+j}$:
\begin{multline}\label{eq:hatQMPrearranged}
		\hat{Q}_m(t)=\sum\limits_{k=0}^{m}\frac{(-1)^kC_{k,r}(a)_k(b)_k}{(c-a-m)_k(c-b-m)_k}
		\sum\limits_{j=0}^{m-k}\frac{(-m+k)_{j}(c-a-b-m)_{j}(-t)_k(k-t)_{j}}{(c-a-b-m+k)_{j}(c-b-m+k)_{j}j!}
		\\
		=\sum\limits_{n=0}^{m}\frac{(-m)_n(-t)_n}{(c-a-m)_{n}(c-b-m)_{n}n!}
		\sum\limits_{k+j=n}\binom{n}{k}\frac{(a)_k(b)_k(c-a-b-m)_{j}}{(-m)_{k}}{}_{r+1}F_{r}\!\left(\begin{matrix}-k,\f+\m\\\f\end{matrix}\right).
\end{multline}
This is an expansion of $\hat{Q}_m(t)$ in the rising factorial basis $(-t)_{n}$.
On the other hand, substituting the numbers $a_k=B_k$ from \eqref{eq:Bn} into \eqref{eq:NewtonP} we arrive at the following formula for the coefficient at $(-t)_n$:
\begin{multline}\label{eq:Newton-MP}
	\sum\limits_{k=0}^{n}(-1)^k\binom{n}{k}\frac{(c-a-b-m)_{k}(c)_{k}}{(c-a-m)_{k}(c-b-m)_{k}}
		{}_{r+3}F_{r+2}\!\left(\begin{matrix}-k,a,b,\f+\m\\c,1-c+a+b+m-k,\f\end{matrix}\right).
\end{multline}
Below we will give a direct hypergeometric proof that the coefficient at  $(-t)_n$ in \eqref{eq:hatQMPrearranged}  coincides with  \eqref{eq:Newton-MP} .   Note also that \cite[Corollary~3]{MP2012} can be recovered from this equality by setting $c=b+1$ and performing some simplifications. 
\begin{lemma}\label{lm:CharacterPol}
The coefficient at  $(-t)_n$ in \eqref{eq:hatQMPrearranged} is equal to \eqref{eq:Newton-MP} for all $n=0,1,\ldots$, whereas for $n>m$ both of them vanish. 
\end{lemma}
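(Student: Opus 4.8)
The plan is to reduce both expressions to finite differences of the single polynomial $F_m$ and then match them term by term. The vanishing for $n>m$ is immediate and I would dispose of it first: the coefficient at $(-t)_n$ in \eqref{eq:hatQMPrearranged} carries the factor $(-m)_n$, which is zero once $n>m$, while \eqref{eq:Newton-MP} equals $(-1)^n\Delta^nB_0$ and hence vanishes for $n>m$ by Lemma~\ref{lm:Bndiff}. It therefore remains to establish the equality for every $n$.

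First I would rewrite the inner terminating series in \eqref{eq:hatQMPrearranged} using the observation from the start of this subsection that the choice $a_k=(\f+\m)_k/(\f)_k=F_m(k)$ yields ${}_{r+1}F_r(-k,\f+\m;\f)=(-1)^k\Delta^kF_m(0)$. After discarding the common factor $[(c-a-m)_n(c-b-m)_n]^{-1}$, this turns \eqref{eq:hatQMPrearranged}'s coefficient into
$$(-m)_n\sum_{k=0}^n(-1)^k\binom{n}{k}\frac{(a)_k(b)_k(c-a-b-m)_{n-k}}{(-m)_k}\Delta^kF_m(0).$$
For \eqref{eq:Newton-MP} I would reuse verbatim the manipulation from the proof of Lemma~\ref{lm:Bndiff}: expanding the ${}_{r+3}F_{r+2}$, interchanging the order of summation and applying Sheppard's transformation \cite[Appendix (III)]{RJRJR1992} recasts \eqref{eq:Newton-MP} as $(-1)^n(a)_n(b)_n[(c-a-m)_n(c-b-m)_n]^{-1}\Delta^n[F_m(t)H(t)]_0$, where $H(t)={}_3F_2(1+m-n,-n+t,c-a-b-m;1-a-n,1-b-n)$ and the difference is in $t$. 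Cancelling the common prefactor, the lemma becomes equivalent to
$$(-m)_n\sum_{k=0}^n(-1)^k\binom{n}{k}\frac{(a)_k(b)_k(c-a-b-m)_{n-k}}{(-m)_k}\Delta^kF_m(0)=(-1)^n(a)_n(b)_n\,\Delta^n[F_mH]_0.$$

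Next I would apply the discrete Leibniz rule $\Delta^n[F_mH]_0=\sum_{k=0}^n\binom{n}{k}\Delta^kF_m(0)\,\Delta^{n-k}H(k)$ to the right-hand side. Both sides are then linear combinations of the quantities $\binom{n}{k}\Delta^kF_m(0)$ with fully explicit coefficients, so it suffices to verify the coefficientwise identity
$$(-m)_n(-1)^k\frac{(a)_k(b)_k(c-a-b-m)_{n-k}}{(-m)_k}=(-1)^n(a)_n(b)_n\,\Delta^{n-k}H(k).$$
The decisive step is the closed evaluation $\Delta^{n-k}H(k)=(1+m-n)_{n-k}(c-a-b-m)_{n-k}[(1-a-n)_{n-k}(1-b-n)_{n-k}]^{-1}$. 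I would obtain it by writing $\Delta^{n-k}H(k)$ as an alternating binomial sum of the terminating series $H(k+i)$, $0\le i\le n-k$, interchanging with the summation index $l$ of the ${}_3F_2$, and collapsing the inner sum through $\sum_{q=0}^{p}(-1)^q\binom{p}{q}(-q)_l=p!\,\delta_{l,p}$, so that only the top term $l=n-k$ survives.

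The final matching is then routine Pochhammer bookkeeping: the reflection $(\alpha)_{n-k}=(-1)^{n-k}(1-\alpha-(n-k))_{n-k}$ applied to $(-m)_n/(-m)_k=(-m+k)_{n-k}$ and to the ratios $(a)_n/(1-a-n)_{n-k}$ and $(b)_n/(1-b-n)_{n-k}$ reduces both sides of the last display to the common value $(-1)^n(a)_k(b)_k(1+m-n)_{n-k}(c-a-b-m)_{n-k}$, which finishes the proof. I expect the main obstacle to be precisely the evaluation of $\Delta^{n-k}H(k)$, that is, showing that the finite difference of the shifted ${}_3F_2$ telescopes to a single Pochhammer quotient, together with keeping the several sign factors consistent; everything after that is elementary.
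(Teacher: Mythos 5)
Your proof is correct, but it follows a genuinely different route from the paper's. The paper rearranges both quantities into single sums over $j$ weighted by $(\f+\m)_j/(\f)_j$ times terminating ${}_3F_2$'s and then matches them \emph{term by term} by citing Sheppard's transformation \cite[Appendix, formula (VI)]{RJRJR1992}; the nontrivial hypergeometric input is thus outsourced to a known formula. You instead work in the basis of the differences $\Delta^k F_m(0)$: the coefficient from \eqref{eq:hatQMPrearranged} lands there immediately via ${}_{r+1}F_r(-k,\f+\m;\f)=(-1)^k\Delta^kF_m(0)$, while \eqref{eq:Newton-MP} is brought there by recycling the Lemma~\ref{lm:Bndiff} rearrangement (which rests on Sheppard's formula (III)) followed by the discrete Leibniz rule, and the crux is your closed evaluation
$$
\Delta^{n-k}H(k)=\frac{(1+m-n)_{n-k}\,(c-a-b-m)_{n-k}}{(1-a-n)_{n-k}\,(1-b-n)_{n-k}},
$$
which you prove from scratch by the collapse identity $\sum_{q=0}^{p}(-1)^q\binom{p}{q}(-q)_l=p!\,\delta_{l,p}$. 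I checked this evaluation, the Leibniz decomposition $\Delta^n[F_mH]_0=\sum_k\binom{n}{k}\Delta^kF_m(0)\,\Delta^{n-k}H(k)$, and the final reflection bookkeeping: both sides of your coefficientwise identity do reduce to $(-1)^n(a)_k(b)_k(1+m-n)_{n-k}(c-a-b-m)_{n-k}$, so the argument closes. In effect you replace the citation of formula (VI) by an elementary, self-contained computation — your $\Delta^{n-k}H(k)$ evaluation is essentially an independent proof of the ${}_3F_2$ relation the paper invokes — at the price of importing the Lemma~\ref{lm:Bndiff} manipulation outside its stated hypothesis $n>m$. That import is legitimate (it is a finite rearrangement plus a terminating-series transformation valid for generic parameters and all $n\ge0$), but you should say so explicitly, since Lemma~\ref{lm:Bndiff} itself is only stated for $n>m$. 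Two cosmetic points: your claim that the \eqref{eq:hatQMPrearranged}-coefficient ``carries the factor $(-m)_n$'' for $n>m$ should be read via $(-m)_n/(-m)_k=(-m+k)_{n-k}$ to avoid a formal $0/0$ when $k>m$ (or simply note that the outer sum in \eqref{eq:hatQMPrearranged} stops at $n=m$); and you silently drop the factor $1/n!$ together with $[(c-a-m)_n(c-b-m)_n]^{-1}$, which is consistent with the paper's own (slightly loose) normalization of ``the coefficient at $(-t)_n$'' but deserves one sentence.
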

	\begin{proof}  We will use the obvious relation 
	$(\alpha)_{n-k}=(-1)^{k}(\alpha)_{n}/(1-\alpha-n)_{k}$ repeatedly without further mentioning.  Start with the inner sum in \eqref{eq:hatQMPrearranged}. Expanding the hypergeometric function and exchanging the order of summations we obtain:
	\begin{multline*}
		\sum\limits_{k=0}^{n}\binom{n}{k}\frac{(a)_k(b)_k(c-a-b-m)_{n-k}}{(-m)_{k}}\sum\limits_{j=0}^{k}\frac{(-k)_{j}(\f+\m)_{j}}{(\f)_{j}j!}
		\\
		=\sum\limits_{j=0}^{n}\sum\limits_{k=j}^{n}
		\frac{(-1)^k(-k)_{j}(\f+\m)_{j}(a)_k(b)_k(-n)_{k}(c-a-b-m)_{n-k}}{(\f)_{j}j!k!(-m)_{k}}
		\\
		=[l:=k-j]=\sum\limits_{j=0}^{n}\sum\limits_{l=0}^{n-j}
		\frac{(-1)^{l+j}(-l-j)_{j}(\f+\m)_{j}(a)_{l+j}(b)_{l+j}(-n)_{l+j}(c-a-b-m)_{n-l-j}}{(\f)_{j}j!(1)_{l+j}(-m)_{l+j}}
		\\
		=\sum\limits_{j=0}^{n}\sum\limits_{l=0}^{n-j}
		\frac{(\f+\m)_{j}(a)_{j}(a+j)_{l}(b)_{j}(b+j)_{l}(-n)_{j}(-n+j)_{l}(c-a-b-m)_{n-j}}{(1-c+a+b+m-n+j)_{l}(\f)_{j}j!(1)_{l}(-m)_{j}(-m+j)_{l}}
		\\
		=\sum\limits_{j=0}^{n}
		\frac{(-n)_{j}(a)_{j}(b)_{j}(c-a-b-m)_{n-j}(\f+\m)_{j}}{(-m)_{j}(\f)_{j}j!}
		{}_{3}F_{2}\!\left(\begin{matrix}-n+j,a+j,b+j\\-m+j,1-c+a+b+m-n+j\end{matrix}\right).
	\end{multline*}
Now consider  \eqref{eq:Newton-MP}. Similar rearrangement yields:
	\begin{multline*}
		\sum\limits_{k=0}^{n}\frac{(-n)_{k}(c-a-b-m)_{k}(c)_{k}}{(c-a-m)_{k}(c-b-m)_{k}k!}
		\sum\limits_{j=0}^{k}\frac{(-k)_j(a)_{j}(b)_{j}(\f+\m)_{j}(-1)^{j}(c-a-b-m)_{k-j}}{(c)_{j}(\f)_{j}(c-a-b-m)_{k}j!}
		\\
		=\sum\limits_{j=0}^{n}\sum\limits_{k=j}^{n}\frac{(a)_{j}(b)_{j}(\f+\m)_{j}(-1)^{j}(-n)_{k}(c)_{k}(-k)_j(c-a-b-m)_{k-j}}{(c)_{j}(\f)_{j}j!(c-a-m)_{k}(c-b-m)_{k}k!}
		\\
		=\sum\limits_{j=0}^{n}\sum\limits_{l=0}^{n-j}\frac{(a)_{j}(b)_{j}(\f+\m)_{j}(-n)_{j+l}(c)_{j+l}(c-a-b-m)_{l}}{(c)_{j}(\f)_{j}j!(c-a-m)_{j+l}(c-b-m)_{j+l}(1)_{l}}
		\\
		=\sum\limits_{j=0}^{n}\frac{(-n)_{j}(a)_{j}(b)_{j}(\f+\m)_{j}}{(c-a-m)_{j}(c-b-m)_{j}(\f)_{j}j!}{}_{3}F_{2}\!\left(\begin{matrix}-n+j,c+j,c-a-b-m\\c-a-m+j,c-b-m+j\end{matrix}\right).
	\end{multline*}
	To establish the equality it remains to note that 
	\begin{multline*}
		\frac{(c-a-m+j)_{n-j}(c-b-m+j)_{n-j}}{(-m+j)_{n-j}(c-a-b-m)_{n-j}}{}_{3}F_{2}\!\left(\begin{matrix}-n+j,c+j,c-a-b-m\\c-a-m+j,c-b-m+j\end{matrix}\right)
		\\
		={}_{3}F_{2}\!\left(\begin{matrix}-n+j,a+j,b+j\\-m+j,1-c+a+b+m-n+j\end{matrix}\right)    
	\end{multline*}
	according to \cite[Appendix, formula (VI)]{RJRJR1992}.
	\end{proof}
Yet another, somewhat simpler form of the polynomial $\hat{Q}_m$ was obtained in \cite[Theorem~2]{KPChapter2020} as follows:
\begin{equation}\label{eq:hatQKP}
		\hat{Q}_m(t)=\sum\limits_{k=0}^m\frac{(-1)^k(a)_k(-b-m)_k(-t)_k(c-a-m+t)_{m-k}}{(c-a-m)_m(c-b-m)_kk!}
		{}_{r+2}F_{r+1}\!\left(\begin{matrix}-k,b,\f+\m\\b+m-k+1,\f\end{matrix}\right).
\end{equation}
Changing the order of summation in this formula, rearranging Pochhammer's symbols and applying \cite[3.2(7)]{LukeBookVol1} for the  truncated hypergeometric series, we get
\begin{equation}\label{eq:hatQKPNew}
\hat{Q}_m(t)\!=\!\frac{(a)_m(b)_{m+1}(-t)_m}{(1-c+a)_{m}(1-c+b)_{m}m!}\sum\limits_{k=0}^m(-1)^k\binom{m}{k}\frac{F_m(k)}{b+k}
{}_{4}F_{3}\!\left(\begin{matrix}-m+k,1,1\!-\!c\!+\!b,c\!-\!a\!+\!t\!-\!m\\1-a-m,1+t-m,b+1+k\end{matrix}\right).
\end{equation}
Note that the ${}_4F_3$ function on the right hand side is Saalsch\"{u}tzian (i.e. the sum of the top parameters is one less than the sum of the bottom parameters), so that Bailey's transformations as summarized in \cite[Appendix~2]{RDN2002} are applicable.  For instance,  application of \cite[Appendix~2(I)]{RDN2002} yields the  form symmetric in $a$, $b$:
	\begin{multline}\label{eq:hatQKPNewNew}
		\hat{Q}_m(t)=\frac{(a)_{m+1}(b)_{m+1}(-t)_{m+1}}{(1-c+a)_{m}(1-c+b)_{m}m!}\sum\limits_{k=0}^m\binom{m}{k}\frac{(-1)^kF_m(k)}{(a+k)(b+k)(-t+k)}
		\\
		\times{}_{4}F_{3}\!\left(\begin{matrix}-m+k,1,c+k,a+b-c-t+m+k+1\\a+k+1,b+k+1,-t+k+1\end{matrix}\right).
	\end{multline}

The operator mapping the polynomial $F_m$ to the polynomial $\hat{Q}_m$ will be denoted by  $\hat{T}_m(a,b;c)$ and  designated \textit{the second Miller-Paris operator}. It depends on the degree $m$,  symmetric with respect to permutation of $a$ and $b$, and acts on polynomials  (or any sequences of length $m+1$) according to any of the equivalent formulas  \eqref{eq:hatQLagrange}, \eqref{eq:hatQMP}, \eqref{eq:hatQKP}, \eqref{eq:hatQKPNew}. In terms of the first Miller-Paris operator defined in \eqref{eq:Tm-defined} we have
	\begin{equation}\label{eq:B-decomposed}
		\hat{T}_m(a,b;c)=T_{m}(a;c)T_{m}(b;c)=T_{m}(b;c)T_{m}(a;c).
	\end{equation}
Hence, we can write 
	\begin{equation}\label{eq:Bm-defined}	\hat{Q}_m(t)=\hat{Q}_m\!\left(\begin{matrix}a,b\\c\end{matrix}\,\bigg\vert\,F_m\,\bigg\vert\,t\right)=[\hat{T}_m(a,b;c)F_{m}](t)=T_{m}(a;c)Q_m\!\left(\begin{matrix}b\\c\end{matrix}\,\bigg\vert\,F_m\,\bigg\vert\, \cdot\right),
	\end{equation}
	  From \eqref{eq:Tm-inverse}, we conclude that 
	$$
	[\hat{T}_m(a,b;c)]^{-1}=\hat{T}_m(c-a-m,c-b-m;c).
	$$
Similarly to \eqref{eq:Tm-inverse}, this identity written explicitly using \eqref{eq:hatQKPNew} or  \eqref{eq:hatQKPNewNew} represents a combinatorial type inversion formula, which might be new.

\section{Polynomial perturbation of Clausen's product identity}

By multiplying the power series coefficients of each of the ${}_2F_1$ functions on the left hand side of Clausen's identity \eqref{eq:Clausen} by the same polynomial $k\to(\f+k)_{\m}/(\f)_{\m}$ we get a polynomial perturbation of the left hand side.  The main result of this section asserts that the function we obtain on the right hand side remains hypergeometric and represents a rational perturbation of the ${}_3F_2$ function on the right hand side of \eqref{eq:Clausen}. We furthermore give an explicit form of this function in the following main theorem.

\begin{theorem}\label{th:ClausenPerturbed}
For the values of parameters such that both sides make sense, the following product formula holds
\begin{equation}\label{eq:ClausenPerturbed}
\Bigg[{}_{r+2}F_{r+1}\left(\begin{matrix}
b, c, \f+\m  \\
1/2+c+b+m, \f
\end{matrix}\bigg\vert\:x\right)\bigg]^2
\!\!=\!\frac{(1/2)_{m}(c+b+1/2)_{m}}{(b+1/2)_{m}(c+1/2)_{m}}
F\left(\begin{matrix}
2b, 2c, c+b \\
1/2+c+b+m, 2c+2b
\end{matrix}\,\bigg\vert\,R\,\bigg\vert\:x\right).
\end{equation}
Here
\begin{equation}\label{eq:ClasenRational}
R(k)=\frac{(\f+k)_{\m}(c+k/2)_{m}}{[(\f)_{\m}]^2(c+b+k/2)_{m}}\hat{R}(-b)
\end{equation}
is a rational function of $k$ with 
\begin{multline}\label{eq:hatRfinal}
\hat{R}(t)=\hat{T}_{m}(c,1/2-k/2;1-k/2)\hat{T}_{m}(1/2-k-m-c-b,f_r+k/2-M_{r-1};1-k/2)
(f_r+\cdot)_{m_r}
\\
\hat{T}_{M_{r-1}}(1-k-f_{r},f_{r-1}+k/2-M_{r-2};1-k/2)
(f_{r-1}+\cdot)_{m_{r-1}}
\\
\hat{T}_{M_{r-2}}(1-k-f_{r-1},f_{r-2}+k/2-M_{r-3};1-k/2)
\cdots
\hat{T}_{m_1}(1-k-f_2,f_1+k/2;1-k/2)(f_1+\cdot)_{m_1},
\end{multline}
where $M_s=m_1+\cdots+m_s$ for $s=1,\ldots,r$, $m=M_r$, and the second Miller-Paris operator $\hat{T}_{m}$ is defined by any of the equivalent formulas  \eqref{eq:hatQLagrange}, \eqref{eq:hatQMP}, \eqref{eq:hatQKP},  \eqref{eq:hatQKPNew} or \eqref{eq:hatQKPNewNew}; the variable $t$ is the argument of the image polynomial of the leftmost operator.
\end{theorem}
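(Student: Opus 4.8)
The plan is to pass to the coefficient of $x^n$ on both sides, turning \eqref{eq:ClausenPerturbed} into a terminating summation identity — a polynomially perturbed Clausen convolution — and then to evaluate that sum by transporting the perturbation factor $F_m(k)=(\f+k)_{\m}/(\f)_{\m}$ across the classical relation \eqref{eq:Clausen} one elementary factor $(f_s+k)_{m_s}$ at a time. The base case $m=0$ is instructive: then $F_0\equiv1$, $\hat R\equiv1$, the prefactor equals $1$, and the statement is exactly \eqref{eq:Clausen} with $(c,d)\mapsto(b,c)$. This fixes the normalisation and identifies the whole content of the theorem with the propagation of the polynomial perturbation. Note that on the right of \eqref{eq:ClasenRational} the argument of the accumulated polynomial is frozen at $t=-b$ and the lower parameter of every operator in \eqref{eq:hatRfinal} is the common value $1-k/2$, which signals that the operators act on the \emph{inner} terminating series indexed by the output exponent $k$.

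For the single-factor mechanism I would first apply the second Miller--Paris transformation \eqref{eq:MP2general} to the perturbed Gauss function on the left, with the triple $(a,b,c)$ of \eqref{eq:MP2general} specialised to $(b,\,c,\,1/2+c+b+m)$. A short computation gives $c-a-b-m=1/2$ and reflected upper parameters $c+1/2,\,b+1/2$, so the left factor becomes $(1-x)^{1/2}$ times a perturbed Gauss function with perturbation $\hat T_m(b,c;\cdot)F_m$; squaring then converts $(1-x)^{1/2}$ into the elementary degree-one rational perturbation $(1-x)$. What remains is to evaluate the square of the reflected perturbed series against the Clausen core. Here the engine is expected to be the chain of Saalsch\"utzian ${}_3F_2$ and ${}_4F_3$ transformations already exploited in Section~2 (the Sheppard and Bailey identities behind Lemmas~\ref{lm:Bndiff} and \ref{lm:CharacterPol}): applied to the inner sums, whose lower parameter is tied to the output index and therefore reads $1-k/2$, they appear precisely as second Miller--Paris operators $\hat T$, while a concluding Gauss/Karlsson--Minton summation collapses the innermost series to the value of the running characteristic polynomial at $t=-b$. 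This is the origin of $\hat R(-b)$ and of the quotient $(c+k/2)_m/(c+b+k/2)_m$ in \eqref{eq:ClasenRational}.

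Finally I would iterate, peeling the factors $(f_1+\cdot)_{m_1},\dots,(f_r+\cdot)_{m_r}$ in turn; each peel shifts the running parameters to the values $f_s+k/2-M_{s-1}$ and $1-k-f_{s+1}$ seen in \eqref{eq:hatRfinal} and contributes one layer $\hat T$, the layers assembling into the ordered product by the composition law $\hat T_m(a,b;c)=T_m(a;c)T_m(b;c)$ of \eqref{eq:B-decomposed} and its multi-step analogue. The main obstacle is exactly this parameter bookkeeping: one must verify that after each peel the lower parameter remains $1-k/2$, that the argument handed from one operator to the next telescopes correctly, and --- most delicately --- that every inner series stays Saalsch\"utzian (or well-poised) so that the collapse to a polynomial value is legitimate at each stage. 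Establishing this single-layer collapse cleanly, together with pinning down the constant $\tfrac{(1/2)_{m}(c+b+1/2)_{m}}{(b+1/2)_{m}(c+1/2)_{m}}$, is the crux; once it is in place, the linearity and invertibility of the Miller--Paris operators recorded in Section~2 deliver \eqref{eq:hatRfinal}.
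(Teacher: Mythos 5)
Your opening move --- taking the Cauchy product and reducing \eqref{eq:ClausenPerturbed} to a terminating summation for the coefficient of $x^k$ --- is exactly the paper's first step, and your reading of the frozen argument $t=-b$ and the lower parameter $1-k/2$ is sound. But there is a genuine gap at the heart of the proposal: the inner sum that results is the series on the left of \eqref{eq:perturbedSaal4F3}, which is \emph{well-poised} (every upper parameter plus its paired lower parameter equals $1-k$), not merely Saalsch\"utzian, and its evaluation \emph{is} the entire content of the theorem. You never establish it. The tools you invoke --- the Sheppard ${}_3F_2$ transformations behind Lemmas~\ref{lm:Bndiff} and \ref{lm:CharacterPol}, and a ``concluding Gauss/Karlsson--Minton summation'' --- cannot sum a series of this type: already at $m=0$ the inner sum is the classical well-poised Saalsch\"utzian ${}_4F_3$ whose evaluation needs a Dougall/Whipple-type theorem, not Gauss. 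The paper's actual engine is Lemma~\ref{lm:perturbedSaal4F3}, obtained by specializing the perturbed Dougall summation \eqref{eq:genDougall7F6} of Corollary~\ref{cr:genDougall7F6}, which in turn rests on the Vyas--Fatawat generalized Whipple transformation \eqref{eq:genWhipple7F6}; that transformation is proved by iterating Bailey's transform \eqref{eq:BaileyTrans} against the extended Saalsch\"utz identity \eqref{eq:Saal_extended}, and it is precisely this iteration --- not the composition law \eqref{eq:B-decomposed} --- that produces the nested, degree-growing operator product \eqref{eq:hatRfinal} with the intervening multiplications by $(f_s+\cdot)_{m_s}$. Two further steps you omit are also essential: the passage from the terminating identity in $-n$ to arbitrary $b$ (the paper substitutes $a\to-k$, $-n\to b$ and argues that two rational functions of $b$ agreeing at $b=0,-1,-2,\ldots$ coincide), and the gamma-function manipulations (duplication and reflection) that produce the constant $\tfrac{(1/2)_{m}(c+b+1/2)_{m}}{(b+1/2)_{m}(c+1/2)_{m}}$. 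You explicitly defer the ``single-layer collapse'' and this constant, i.e.\ the crux is left unproved.

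A secondary point: the detour through \eqref{eq:MP2general} with $(a,b,c)\mapsto(b,c,1/2+c+b+m)$ is computed correctly (indeed $c-a-b-m=1/2$, giving reflected parameters $c+1/2$, $b+1/2$), but it is a red herring. After extracting $(1-x)^{1/2}$ and squaring, you are left with the square of another perturbed Gauss-type series whose evaluation is exactly as hard as the original problem, and whose lower parameter now sits in the ``wrong'' Clausen position relative to its upper parameters; the paper makes no use of such a reduction. So the proposal is a plausible road map whose signposts point at the right objects, but the bridge it must cross --- the perturbed well-poised summation --- is assumed rather than built.
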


\remark We can rewrite the coefficient at  $\hat{R}(-b)$ in \eqref{eq:ClasenRational} in terms of Pochhammer's symbols indexed by $k$ as follows
\begin{multline*}
\frac{(\f+k)_{\m}(c+k/2)_{m}}{[(\f)_{\m}]^2(c+b+k/2)_{m}}=\frac{(c)_{m}(\f+\m)_{k}}{(\f)_{\m}(c+b)_{m}(\f)_{k}}
\\
\times\frac{(2c+1)_{k}(2c+3)_{k}\cdots(2c+2m-1)_{k}(2c+2b)_{k}(2c+2b+2)_{k}\cdots(2c+2b+2m-2)_{k}}{(2c)_{k}(2c+2)_{k}\cdots(2c+2m-2)_{k}(2c+2b+1)_{k}(2c+2b+3)_{k}\cdots(2c+2b+2m-1)_{k}}.
\end{multline*}

We will split the proof of Theorem~\ref{th:ClausenPerturbed} into three lemmas. The first lemma,  due to Kim, Rathie and Paris  \cite[Theorem~1]{KRP2013}, is immediate by comparing the power series coefficients on both sides of \eqref{eq:MP2general}. 
\begin{lemma}\label{lm:Saal_extended}
The following extension of Saalsch\"{u}tz's  identity holds
\begin{equation}\label{eq:Saal_extended}
{}_{r+3}F_{r+2}\left(\begin{array}{l}
-n, \alpha, \beta,  \f+\m \\
c, 1+\alpha+\beta-\gamma+m-n,  \f
\end{array}\right)=\frac{(\gamma-\alpha-m)_n(\gamma-\beta-m)_n(\llambda+1)_n}{(\gamma)_n(\gamma-\alpha-\beta-m)_n(\llambda)_n},
\end{equation}
 where $\llambda$ is the $m$-vector of the negated roots of the polynomial 
 $$
\hat{Q}_m(t)=\hat{Q}_m\!\left(\begin{matrix}\alpha,\beta\\\gamma\end{matrix}\:\bigg\vert \frac{(\f+\cdot)_{\m}}{(\f)_{\m}}\:\bigg\vert\: t\right)=\frac{1}{(\f)_{\m}}[\hat{T}_{m}(\alpha,\beta;\gamma)(\f+\cdot)_{\m}](t),
 $$
 i.e. the roots of the polynomial $\hat{Q}_m(-t)$.
\end{lemma}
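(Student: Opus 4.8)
The plan is to read off the claimed evaluation directly from the coefficient identity that was already secured while proving the second Miller-Paris transformation \eqref{eq:MP2general}; as the text preceding the lemma signals, once that transformation is in hand the summation formula follows merely by equating power-series coefficients. Concretely, I would work in \eqref{eq:MP2general} with the relabelling $a\mapsto\alpha$, $b\mapsto\beta$, $c\mapsto\gamma$, so that the characteristic polynomial on the right becomes precisely $\hat{Q}_m(t)=\hat{Q}_m(\alpha,\beta;\gamma\,\vert\,(\f+\cdot)_{\m}/(\f)_{\m}\,\vert\,t)$ of the statement, and its negated roots are the vector $\llambda$.

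The core step is the Cauchy-product computation \eqref{eq:MP2CauchyProduct}: multiplying the left-hand side of \eqref{eq:MP2general} by $(1-x)^{\alpha+\beta+m-\gamma}$ produces a power series whose $n$-th coefficient is $\tfrac{(\gamma-\alpha-m)_n(\gamma-\beta-m)_n}{(\gamma)_n\,n!}B_n$, with $B_n$ the balanced ${}_{r+3}F_{r+2}$ of \eqref{eq:Bn} (with $c$ replaced by $\gamma$). On the other hand \eqref{eq:MP2general} identifies that same product with the function $F$ having upper parameters $\gamma-\alpha-m,\gamma-\beta-m$, lower parameter $\gamma$, and perturbing polynomial $\hat{Q}_m$, whose $n$-th coefficient is $\tfrac{(\gamma-\alpha-m)_n(\gamma-\beta-m)_n}{(\gamma)_n\,n!}\hat{Q}_m(n)$. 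Equating coefficients yields $B_n=\hat{Q}_m(n)$ for every $n\ge0$; this is exactly the relation established in the paragraph following Lemma~\ref{lm:Bndiff}, where the interpolation argument guarantees validity not merely for $n\le m$ but for all nonnegative integers.

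It then remains to unwind $B_n=\hat{Q}_m(n)$ into the stated closed form. Solving \eqref{eq:Bn} for the hypergeometric factor gives
\[
{}_{r+3}F_{r+2}\!\left(\begin{matrix}-n,\alpha,\beta,\f+\m\\\gamma,1-\gamma+\alpha+\beta+m-n,\f\end{matrix}\right)=\frac{(\gamma-\alpha-m)_n(\gamma-\beta-m)_n}{(\gamma-\alpha-\beta-m)_n(\gamma)_n}\,\hat{Q}_m(n).
\]
Since $B_0=1$ (the $n=0$ instance of \eqref{eq:Bn} is a single-term series), the polynomial $\hat{Q}_m$ is normalized by $\hat{Q}_m(0)=1$, so writing $\llambda=(\lambda_1,\ldots,\lambda_m)$ for its negated roots we have $\hat{Q}_m(t)=\prod_{j=1}^{m}(t+\lambda_j)/\lambda_j$ and hence $\hat{Q}_m(n)=\prod_{j=1}^{m}(\lambda_j+1)_n/(\lambda_j)_n=(\llambda+1)_n/(\llambda)_n$, using the elementary identity $(\lambda+1)_n/(\lambda)_n=(\lambda+n)/\lambda$. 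Substituting this into the displayed equation reproduces \eqref{eq:Saal_extended} verbatim.

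The argument carries essentially no obstacle of its own: the entire analytic content is the relation $B_n=\hat{Q}_m(n)$ for all $n$, and that was already obtained via Lemma~\ref{lm:Bndiff} and the interpolation remark. The only points demanding care are bookkeeping ones — tracking the Pochhammer ratios when solving \eqref{eq:Bn} for the series, and the passage from the root factorization of $\hat{Q}_m$ to the ratio $(\llambda+1)_n/(\llambda)_n$. I would also flag the mild notational subtlety that consistency with the classical ($m=0$) Saalsch\"{u}tz case forces the first lower parameter of the ${}_{r+3}F_{r+2}$ to coincide with $\gamma$, so that the series is genuinely Saalsch\"{u}tzian and the evaluation makes sense.
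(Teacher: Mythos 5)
Your proposal is correct and takes essentially the same route as the paper: the paper's entire proof is the one-line remark that \eqref{eq:Saal_extended} is immediate by comparing power-series coefficients on the two sides of \eqref{eq:MP2general}, which is exactly what you carry out, using $B_n=\hat{Q}_m(n)$ for all $n\ge0$ (secured by Lemma~\ref{lm:Bndiff} and the interpolation remark) and the factorization $\hat{Q}_m(n)=(\llambda+1)_n/(\llambda)_n$ coming from the normalization $\hat{Q}_m(0)=1$. Your flag that the lower parameter written as $c$ in the statement must be read as $\gamma$ is also accurate.
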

The classical Whipple's theorem transforming very well-poised ${}_7F_6$ to Saalsch\"{u}tzian ${}_4F_3$ reads  \cite[Theorem~3.4.4]{AAR}:
\begin{multline}\label{eq:Whipple7F6-4F3}
{}_7F_6\left(\begin{matrix}
a, 1+a/2, b, c, d, e,-n \\
a/2, 1+a-b, 1+a-c, 1+a-d, 1+a-e, 1+a+n 
\end{matrix}\right)\\
\\
=\frac{(1+a)_{n}(1+a-c-e)_{n}}{(1+a-e)_{n}(1+a-c)_{n}} {}_4F_3\left(\begin{matrix}
1+a-b-d, c, e,-n \\
1+a-b, 1+a-d, c+e-a-n 
\end{matrix}\right).    
\end{multline}
In \cite[Theorem~3]{VF2022} Vyas and Fatawat found a polynomial perturbation of \eqref{eq:Whipple7F6-4F3} generalizing a previous result of Srivastava, Vyas and Fatawat \cite[Theorem~3.2]{SVF2019}. See also a related identity in Mishev \cite[Proposition~3.6]{Mishev2022}.  We record their formula in the  following lemma for which we provide a complete proof. This helps to keep the paper self-contained and, more importantly, to elucidate the structure of the underlying characteristic polynomial $\hat{W}$ in terms of the second Miller-Paris operator $\hat{T}$, which is not immediately apparent in \cite{VF2022}.
\begin{lemma}\label{lm:eq:genWhipple7F6}
The following generalized Whipple's transformation holds true:
\begin{multline}\label{eq:genWhipple7F6}
F\left(\begin{matrix}
a, 1+a/2, b, c, d, e, a-\f+1,   \f+\m, -n  \\
a/2, 1+a-b, 1+a-c, 1+a-d, 1+a-e, 1+a-\f-\m, \f,  1+a+n
\end{matrix}\right)
\\
=\frac{(1+a)_{n}(1+a-c-e)_{n}}{(1+a-c)_{n}(1+a-e)_{n}}
F\left(\begin{matrix}
c,e, 1+a-b-d-m, \etta+1, -n  \\
1+a-b, 1+a-d,c+e-a-n,\etta
\end{matrix}\right),
\end{multline}
where $\etta$ is the vector of negated roots of the polynomial defined by 
\begin{multline}\label{eq:hatW-defined}
\hat{W}_m(t)=\frac{1}{(\f)_{\m}}\hat{T}_{M_r}(d,f_r-b-M_{r-1};1+a-b)
(f_r+\cdot)_{m_r}
\\
\hat{T}_{M_{r-1}}(1+a-f_{r},f_{r-1}-b-M_{r-2};1+a-b)
(f_{r-1}+\cdot)_{m_{r-1}}
\\
\hat{T}_{M_{r-2}}(1+a-f_{r-1},f_{r-2}-b-M_{r-3};1+a-b)
\cdots
\hat{T}_{m_1}(1+a-f_2,f_1-b;1+a-b)(f_1+\cdot)_{m_1},
\end{multline}
where $M_{s}=m_1+m_2+\cdots+m_s$, $m=M_r$.
\end{lemma}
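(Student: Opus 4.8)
The plan is to establish \eqref{eq:genWhipple7F6} by induction on the number $r$ of parameter blocks, taking the classical Whipple transformation \eqref{eq:Whipple7F6-4F3} as the base case. First I would observe that the two pairs adjoined for each block, namely $(f_i+m_i,\,1+a-f_i-m_i)$ and $(a-f_i+1,\,f_i)$, keep the series very-well-poised: every adjoined numerator parameter $x$ retains its mirror $1+a-x$ in the bottom row. Regrouping the four adjoined Pochhammer symbols as the two polynomial factors $(f_i+\cdot)_{m_i}$ and $(1+a-f_i-m_i+\cdot)_{m_i}$ exhibits the block as a polynomial perturbation living entirely in the ``$b,d$-sector'' of Whipple's transformation (with $\gamma=1+a-b$ playing the role of the fixed bottom parameter and $d$ the role of the running upper one), while the ``$c,e$-sector'' is left untouched. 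This already explains why the prefactor $(1+a)_n(1+a-c-e)_n/[(1+a-c)_n(1+a-e)_n]$ must be inherited verbatim from \eqref{eq:Whipple7F6-4F3}.

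The engine of the inductive step is the extended Saalsch\"utz identity of Lemma \ref{lm:Saal_extended}. In a standard derivation of Whipple's transformation the decisive internal step is a terminating Pfaff--Saalsch\"utz summation; once a block's factor $(f_r+\cdot)_{m_r}$, together with the perturbation accumulated from the inner blocks, is attached, that inner series becomes a polynomially perturbed Saalsch\"utzian series, which is summed instead by Lemma \ref{lm:Saal_extended}. Since the second Miller--Paris operator $\hat T$ is linear and well defined on arbitrary finite sequences (as noted after \eqref{eq:Bm-defined}), each such summation replaces the current characteristic polynomial $P$ of degree $M_r$ by $\hat T_{M_r}(d,\,f_r-b-M_{r-1};\,1+a-b)P$ and shifts the running parameter. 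Peeling the outermost block $f_r$, its two shifts split as $\gamma-d-M_r=1+a-b-d-m$, which is exactly the surviving core parameter seen on the right of \eqref{eq:genWhipple7F6}, and $\gamma-(f_r-b-M_{r-1})-M_{r-1}=1+a-f_r$, which becomes the first argument of the next operator and so reduces the problem to the $(r-1)$-block identity with $d$ replaced by $1+a-f_r$. Iterating, the running parameter telescopes through $d,\,1+a-f_r,\,1+a-f_{r-1},\ldots,1+a-f_2$, the third argument stays $1+a-b$, the second arguments appear as $f_i-b-M_{i-1}$, and the multiplications by $(f_i+\cdot)_{m_i}$ interleave the operators, so the assembled composition is precisely \eqref{eq:hatW-defined}; its negated roots are the components of $\etta$.

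The step I expect to be the main obstacle is exactly this single-block reduction and the attendant parameter bookkeeping: verifying that the inner series produced at each stage is genuinely Saalsch\"utzian with parameters matching the hypotheses of Lemma \ref{lm:Saal_extended}, and that the two shifts split correctly, one feeding the surviving core parameter $1+a-b-d-m$ with the full accumulated degree $M_r$ and the other feeding the next operator's first argument with the partial degree $M_{i-1}$. One must also confirm that the intermediate ``surviving'' parameters produced by the inner blocks are reabsorbed into the characteristic polynomial through the operator composition rather than into the core ${}_4F_3$, and check throughout the iteration the non-vanishing (Pochhammer) conditions that make each application of Lemma \ref{lm:Saal_extended} legitimate and every intermediate series well defined.
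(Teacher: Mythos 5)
Your outline correctly reads off the combinatorial structure of \eqref{eq:hatW-defined} (the telescoping first arguments $d,\,1+a-f_r,\ldots,1+a-f_2$, the second arguments $f_i-b-M_{i-1}$, the interleaved multiplications by $(f_i+\cdot)_{m_i}$), and it correctly identifies Lemma~\ref{lm:Saal_extended} as one of the two engines of the proof. But the inductive step as you describe it cannot be executed, and the missing ingredient is exactly the step you yourself flag as ``the main obstacle'': your proposal contains no mechanism for keeping the free parameters $c$ and $e$ alive while a block is absorbed. In the paper, each block costs a full cycle of \emph{two} moves. First, in the transformation with $i-1$ blocks one \emph{specializes} two free parameters ($e=f_i+m_i$ and $c=1+a-f_i$), which turns the Saalsch\"utzian right-hand side into exactly the shape of Lemma~\ref{lm:Saal_extended} and yields a \emph{summation} formula with $i$ blocks (a perturbed Dougall-type evaluation, e.g.\ \eqref{eq:7F6-one-m-shift} at the first stage, Corollary~\ref{cr:genDougall5F4} in general); it is this substitution $c\to 1+a-f_i$ \emph{inside the previously accumulated characteristic polynomial} that creates the argument $1+a-f_i$ of the next operator. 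Second, one applies Bailey's transform \eqref{eq:BaileyTrans}: the new summation supplies $\beta_k$, the \emph{classical} Saalsch\"utz theorem supplies $\gamma_k$, and two fresh free parameters re-enter through $\delta_r$, producing the transformation with $i$ blocks. Your outline contains only the first move. Without the Bailey step, specializing two parameters is an irreversible loss of generality: ``reducing to the $(r-1)$-block identity with $d$ replaced by $1+a-f_r$'' produces a summation formula with no free $c,e$, not the two-free-parameter transformation \eqref{eq:genWhipple7F6} that the induction needs.

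Relatedly, your account of where the perturbation enters is inverted. With the blocks sitting on the very well-poised side, the inner Saalsch\"utzian sum of the Bailey scheme ($\gamma_k$) stays classical; what becomes perturbed is the other inner sum ($\beta_k$), a terminating very well-poised series, which Lemma~\ref{lm:Saal_extended} cannot sum directly --- its evaluation is itself the nontrivial intermediate summation mentioned above, obtained by applying Lemma~\ref{lm:Saal_extended} \emph{externally} to the previous-level transformation, not internally to a rearrangement of the current series. There is also a consequential slip in the shifts: Lemma~\ref{lm:Saal_extended} must be applied with the full accumulated degree $M_r$, so the second prefactor is $(\gamma-\beta-M_r)_n=(1+a-f_r-m_r)_n$, not $(1+a-f_r)_n$; this factor cancels against part of Whipple's prefactor rather than feeding the next operator, whose argument $1+a-f_r$ arises from the parameter substitution described above.
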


\begin{proof}
Taking $e=f_1+m_1$, $d=1+a-f_1$ in \eqref{eq:Whipple7F6-4F3},  the Saalsch\"{u}tzian ${}_4F_3$ function on the right hand side takes the form \eqref{eq:Saal_extended} with  by $r=1$, and hence, 
$$
{}_4F_3\left(\begin{matrix}
-n, f_1-b, c,f_1+m_1\\
1+a-b, c+f_1+m_1-a-n,f_1 
\end{matrix}\right)=\frac{(1+a-f_1-m_1)_n(1+a-b-c-m_1)_n(\etta_1+1)_n}{(1+a-b)_n(1+a-c- f_1-m_1)_n(\etta_1)_n},
$$
where $\etta_1$ are negated roots of 
$$
P_1(t)=\hat{Q}_{m_1}\!\left(\begin{matrix}f_1-b,c\\1+a-b\end{matrix}\:\bigg\vert \frac{(f_1+\cdot)_{m_1}}{(f_1)_{m_1}}\:\bigg\vert t\right)=\frac{1}{(f_1)_{m_1}}[\hat{T}(f_1-b,c;1+a-b)(f_1+\cdot)_{m_1}](t).
 $$
This leads to 
\begin{multline}\label{eq:7F6-one-m-shift}
{}_7F_6\left(\begin{matrix}
a, 1+a/2, b, c, a-f_1+1, f_1+m_1,-n  \\
a/2, 1+a-b, 1+a-c, 1+a-f_1-m_1, f_1, 1+a+n
\end{matrix}\right) \\
\quad=\frac{(1+a)_{n}(1+a-b-c-m_1)_{n}(\etta_1+1)_n}{(1+a-b)_{n}(1+a-c)_{n}(\etta_1)_n}.
\end{multline}
Next, following \cite{VF2022}, we will apply Bailey's transform given by:
\begin{equation}\label{eq:BaileyTrans}
\text{if}~\beta_k=\sum_{r=0}^{k}\alpha_{r}u_{k-r}v_{k+r},~~\gamma_k=\sum_{r=k}^{\infty}\delta_{r}u_{r-k}v_{r+k},~\text{then}~\sum_{k=0}^{\infty}\alpha_k\gamma_k=\sum_{k=0}^{\infty}\beta_k\delta_k,
\end{equation}
which can be verified by a rather straightforward rearrangement of  the order of summation.
Define
$$
\begin{gathered}
\alpha_r=\frac{\left(a, 1+a/2, b, c, 1+a-f_1, f_1+m_1 \right)_r(-1)^r}{\left(a/2, 1+a-b, 1+a-c, f_1,1+a-f_1-m_1\right)_r r!},
\\
u_r=\frac{1}{r!}, \quad v_r=\frac{1}{(1+a)_r} \quad \text { and } \quad \delta_r=\frac{(d, e,-n)_r}{(d+e-a-n)_r},
\end{gathered}
$$
where the shorthand notation $(a_1,a_2,\ldots,a_q)_r=(a_1)_{r}(a_2)_{r}\cdots(a_q)_{r}$ has been used. 
This yields
\begin{multline*}
\beta_k=
\sum_{r=0}^{k}\frac{\alpha_{r}(-k)_{r}(-1)^r}{k!(1+a)_{k}(1+a+k)_{r}}\\
=\frac{1}{(1+a)_{k}k!}{}_7F_6\left(\begin{matrix}
a, 1+a/2, b, c, 1+a-f_1,  f_1+m_1, -k  \\
a/2, 1+a-b, 1+a-c, f_1, 1+a-f_1-m_1, 1+a+k
\end{matrix}\right)
\\
=\frac{(1+a-b-c-m_1)_{k}(\etta_1+1)_k}{k!(1+a-b)_{k}(1+a-c)_{k}(\etta_1)_n}
\end{multline*}
by \eqref{eq:7F6-one-m-shift}.
Now, replacing $r$ by $r+k$, we get by an application of the Saalsch\"{u}tz theorem:
\begin{multline*}
\gamma_k=\sum_{r=0}^{\infty}\delta_{r+k}u_{r}v_{r+2k}
=\sum_{r=0}^{\infty}\frac{(d)_{r+k}(e)_{r+k}(-n)_{r+k}}{(d+e-a-n)_{r+k}r!(1+a)_{r+2k}}
\\
=\frac{(-n)_{k}(d)_{k}(e)_{k}}{(d+e-a-n)_{k}(1+a)_{2k}}\sum_{r=0}^{n-k}\frac{(d+k)_{r}(e+k)_{r}(-n+k)_{r}}{(1+a+2k)_{r}(d+e-a-n+k)_{r}r!}
\\
=\frac{(-n)_{k}(d)_{k}(e)_{k}}{(d+e-a-n)_{k}(1+a)_{2k}}
{}_3F_2\left(\begin{matrix}d+k,e+k,-n+k\\
1+a+2k,d+e-a-n+k\end{matrix}\right)
\\
=\frac{(-n)_{k}(d)_{k}(e)_{k}(1+a-d+k)_{n-k}(1+a-e+k)_{n-k}}{(1+a)_{2k}(d+e-a-n)_{k}(1+a+2k)_{n-k}(1+a-d-e)_{n-k}}
\\
=\frac{(-n)_{k}(d)_{k}(e)_{k}(1+a-d+k)_{n}(1+a-e+k)_{n}(1+a+k+n)_{k}(-1)^k}{(1+a)_{2k}(1+a+2k)_{n}(1+a-d-e)_{n}(1+a-d+n)_{k}(1+a-e+n)_{k}}
\\
=\frac{(-n)_{k}(d)_{k}(e)_{k}(1+a-d)_{n}(1+a-e)_{n}(1+a+k)_{k}(-1)^k}{(1+a)_{2k}(1+a+k)_{n}(1+a-d-e)_{n}(1+a-d)_{k}(1+a-e)_{k}}
\\
=\frac{(-1)^k(-n)_{k}(d)_{k}(e)_{k}(1+a-d)_{n}(1+a-e)_{n}}{(1+a)_{n}(1+a+n)_{k}(1+a-d-e)_{n}(1+a-d)_{k}(1+a-e)_{k}},
\end{multline*}
where we utilized the easily verifiable identities
$$
(A)_{n-k}=\frac{(A)_{n}(-1)^k}{(1-A-n)_{k}},~~~(A-k)_{k}=(-1)^k(1-A)_{k}
$$
and
$$
\frac{(1+a+k)_k}{(1+a)_{2k}(1+a+k)_{n}}=\frac{1}{(1+a)_{n}(1+a+n)_{k}}.
$$
Substituting these values of $\beta_k$, $\gamma_k$ into Bailey's formula \eqref{eq:BaileyTrans} leads to 
\begin{multline*}
\sum_{k=0}^{\infty}
\frac{\left(a, 1+a/2, b, c, d, e, 1+a-f_1, f_1+m_1,-n\right)_k}{\left(a/2, 1+a-b, 1+a-c, 1+a-d, 1+a-e, f_1,1+a-f_1-m_1,1+a+n\right)_k k!}
\\
=
{}_9F_8\left(\begin{matrix}
a, 1+a/2, b, c, d, e, 1+a-f_1, f_1+m_1,-n  \\
a/2, 1+a-b, 1+a-c, 1+a-d, 1+a-e, f_1, 1+a-f_1-m_1, 1+a+n
\end{matrix}\right)
\\
=\frac{(1+a-d-e)_{n}(1+a)_{n}}{(1+a-d)_{n}(1+a-e)_{n}}\sum_{k=0}^{n}\frac{(1+a-b-c-m_1)_{k}(\etta_1+1)_k}{k!(1+a-b)_{k}(1+a-c)_{k}(\etta_1)_n}\frac{(d, e,-n)_{k}}{(d+e-a-n)_{k}}
\\
=\frac{(1+a-d-e)_{n}(1+a)_{n}}{(1+a-d)_{n}(1+a-e)_{n}}
{}_{m_1+4}F_{m_1+3}\left(\begin{array}{l}
-n, d, e, 1+a-b-c-m_1,  \etta_1+1\\
1+a-b, 1+a-c, d+e-a-n, \etta_1
\end{array}\right).
\end{multline*}
The function on the right hand side is Saalsch\"utzian, and hence, this extends Whipple's theorem \eqref{eq:Whipple7F6-4F3}.  Then, we can repeat the same trick: taking $e=f_2+m_2$, $c=1+a-f_2$ in the above formula,  the Saalsch\"{u}tzian  ${}_{m_1+4}F_{m_1+3}$ function on the right hand side takes the form \eqref{eq:Saal_extended} and we obtain
\begin{multline*}
{}_{m_1+4}F_{m_1+3}\left(\begin{array}{l}
-n, d, f_2-b-m_1,  f_2+m_2, \etta_1+1\\
1+a-b, d+f_2+m_2-a-n, f_2, \etta_1
\end{array}\right)
\\
=\frac{(1+a-b-d-m_1-m_2)_n(1+a-f_2-m_2)_n(\etta_2+1)_n}{(1+a-b)_n(1+a-d-f_2-m_2)_n(\etta_2)_n},
\end{multline*}
where $\etta_2$ is the vector of negated roots of the polynomial 
\begin{multline*}
\hat{T}_{m_1+m_2}(d,f_2-b-m_1;1+a-b)\frac{(f_2+\cdot)_{m_2}(\etta_1+\cdot)_{1}}{(f_2)_{m_2}(\etta_1)_{1}}
\\
=\hat{T}_{m_1+m_2}(d,f_2-b-m_1;1+a-b)\hat{T}_{m_1}(1+a-f_2,f_1-b;1+a-b)\frac{(f_1+\cdot)_{m_1}}{(f_1)_{m_1}},    
\end{multline*}
and we substituted $c=1+a-f_2$ in the definition of the polynomial $P_1$ having roots $\etta_1$.  This leads to 
\begin{multline*}
{}_9F_8\left(\begin{matrix}
a, 1+a/2, b, d, a-f_1+1, 1+a-f_2,  f_1+m_1, f_2+m_2, -n  \\
a/2, 1+a-b, 1+a-d, 1+a-f_1-m_1, 1+a-f_2-m_2, f_1,  f_2,  1+a+n
\end{matrix}\right)
\\
=\frac{(1+a)_{n}(1+a-b-d-m_1-m_2)_{n}(\etta_2+1)_{n}}{(1+a-b)_{n}(1+a-d)_{n}(\etta_2)_n}.
\end{multline*}
We can now define new $\alpha_r$ as the summand of ${}_9F_8$ multiplied by $(-1)^r$ and with $(-n)_r/(1+a+n)_{r}$ omitted, and $u_r$, $v_r$, $\delta_r$ as before (we need to use new symbols in the definition of $\delta_r$ not appearing in $\alpha_r$, say $c$ and $e$) and apply Bailey's formula \eqref{eq:BaileyTrans} again.  Repeating these steps $r$ times we arrive at \eqref{eq:genWhipple7F6}.
\end{proof}

By stopping the algorithm in the proof of Lemma~\ref{lm:eq:genWhipple7F6} before the ultimate application of the Bailey transform, we obtain a polynomial perturbation of  Dougall’s terminating ${}_5F_4(1)$ summation theorem as given in \cite[Theorem~2]{VF2022}.
\begin{corollary}\label{cr:genDougall5F4}
The following generalized Dougall's ${}_5F_4$ summation holds true:
\begin{multline}\label{eq:genDougall5F4}
F\left(\begin{matrix}
a, 1+a/2, b, d, a-\f+1,   \f+\m, -n  \\
a/2, 1+a-b, 1+a-d, 1+a-\f-\m, \f,  1+a+n
\end{matrix}\right)
\\
=\frac{(1+a)_{n}(1+a-b-d-M_r)_{n}(\etta+1)_{n}}{(1+a-b)_{n}(1+a-d)_{n}(\etta)_n},
\end{multline}
where the vector $\etta$ comprises the roots of $\hat{W}$ defined by \eqref{eq:hatW-defined}.
\end{corollary}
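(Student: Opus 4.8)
The plan is to extract the corollary directly from the iterative construction already carried out in the proof of Lemma~\ref{lm:eq:genWhipple7F6}, observing that the generalized Dougall summation \eqref{eq:genDougall5F4} is precisely the closed-form evaluation produced at the \emph{final} application of the extended Saalsch\"utz identity (Lemma~\ref{lm:Saal_extended}), one step before the concluding Bailey transform \eqref{eq:BaileyTrans}. That algorithm alternates two operations: a Saalsch\"utz evaluation, which turns a Whipple-type transformation carrying $s$ integral-parameter-difference pairs into a very well-poised \emph{summation} with an accumulated root vector $\etta^{(s)}$, and a Bailey transform, which restores a transformation so that the next factor $f_{s+1}$ may be adjoined. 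Since the terminal Bailey transform merely re-expresses the already accumulated summation as the transformation \eqref{eq:genWhipple7F6}, transplanting the very roots $\etta$ into its Saalsch\"utzian right-hand side as the pair $\etta+1,\etta$, the characteristic polynomial $\hat{W}$ of \eqref{eq:hatW-defined} is common to both statements. Hence I would carry the algorithm through its $r$-th Saalsch\"utz evaluation and stop there, omitting the final Bailey transform.

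Concretely, the base case $r=1$ is already recorded as \eqref{eq:7F6-one-m-shift}: setting $e=f_1+m_1$ and $d=1+a-f_1$ in classical Whipple \eqref{eq:Whipple7F6-4F3} and summing the resulting Saalsch\"utzian ${}_4F_3$ by Lemma~\ref{lm:Saal_extended} yields exactly \eqref{eq:genDougall5F4} for a single factor (after the harmless renaming $c\to d$), with $\etta_1$ the negated roots of $\frac{1}{(f_1)_{m_1}}\hat{T}_{m_1}(d,f_1-b;1+a-b)(f_1+\cdot)_{m_1}$, which is the $r=1$ instance of \eqref{eq:hatW-defined}. For the inductive step I would assume that after adjoining $s$ factors the algorithm has produced a Whipple-type transformation whose Saalsch\"utzian right-hand side has root vector $\etta^{(s)}$; specializing $e=f_{s+1}+m_{s+1}$ together with one of the surviving free parameters $c$ or $d$ to $1+a-f_{s+1}$ again renders that right-hand side summable by Lemma~\ref{lm:Saal_extended}. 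The evaluation composes one further second Miller--Paris operator onto the characteristic polynomial, namely the factor $\hat{T}_{M_{s+1}}(\,\cdot\,,f_{s+1}-b-M_{s};1+a-b)(f_{s+1}+\cdot)_{m_{s+1}}$ of \eqref{eq:hatW-defined}, and updates the roots to $\etta^{(s+1)}$. Stopping after the $r$-th such evaluation, at which stage the surviving free Whipple parameters are exactly $b$ and $d$, delivers \eqref{eq:genDougall5F4} with $\etta=\etta^{(r)}$ the negated roots of $\hat{W}$.

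The substantive point, and the step I expect to cost the most care, is the bookkeeping that pins down the cumulative root vector. One must verify that each Saalsch\"utz evaluation attaches the correct operator $\hat{T}_{M_{s+1}}$ with precisely the shifted arguments $1+a-f_{s+2}$ and $f_{s+1}-b-M_{s}$ occurring in \eqref{eq:hatW-defined}, and that the alternation of which parameter ($c$ or $d$) is consumed in each round leaves exactly the pair $b,d$ free at the end while producing the nested operator product in the stated order. This amounts to matching the output of Lemma~\ref{lm:Saal_extended}, whose roots are those of $\hat{Q}_m(-t)=\frac{1}{(\f)_{\m}}[\hat{T}_m(\alpha,\beta;\gamma)(\f+\cdot)_{\m}](-t)$, against the definition \eqref{eq:Bm-defined} of $\hat{T}$ at each stage, while tracking how the previously accumulated roots $\etta^{(s)}$ re-enter, via Lemma~\ref{lm:Saal_extended}, as an additional factor $(\etta^{(s)}+\cdot)_1$ on which the next operator acts, thereby fusing into the nested product of \eqref{eq:hatW-defined}. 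Everything else is the routine rearrangement of Pochhammer symbols already performed inside the proof of Lemma~\ref{lm:eq:genWhipple7F6}.
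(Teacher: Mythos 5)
Your proposal is correct and is essentially identical to the paper's own argument: the paper proves Corollary~\ref{cr:genDougall5F4} in one line by ``stopping the algorithm in the proof of Lemma~\ref{lm:eq:genWhipple7F6} before the ultimate application of the Bailey transform,'' which is exactly your plan of terminating at the $r$-th Saalsch\"utz evaluation. Your inductive bookkeeping of the accumulated root vectors $\etta^{(s)}$ and of how each evaluation composes a further operator $\hat{T}_{M_{s+1}}$ onto $\hat{W}$ simply makes explicit what the paper leaves implicit in the iteration.
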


Set now $1+a-b-d-m=c+e-a-n$ in \eqref{eq:genWhipple7F6} so that the hypergeometric function on the right hand side becomes 
$$
F\left(\begin{matrix}
c,e, \etta+1, -n  \\
1+a-b, 1+a-d,\etta
\end{matrix}\right),
$$
whereas the parametric excess equals`$1$, i.e. it is Saalsch\"{u}tzian. Hence, we are in the position to apply \eqref{eq:Saal_extended} to obtain \cite[Theorem~4]{VF2022}:
\begin{corollary}\label{cr:genDougall7F6}
Suppose $1+2a=c+e+b+d+m-n$, then 
\begin{multline}\label{eq:genDougall7F6}
F\left(\begin{matrix}
a, 1+a/2, b, c, d, e, a-\f+1,   \f+\m, -n  \\
a/2, 1+a-b, 1+a-c, 1+a-d, 1+a-e, 1+a-\f-\m, \f,  1+a+n
\end{matrix}\right)
\\
=\frac{(1+a)_{n}(1+a-c-e)_{n     }(1+a-b-c-m)_n(1+a-b-e-m)_n(\llambda+1)_n}{(1+a-c)_{n}(1+a-e)_{n}(1+a-b)_n(1+a-b-c-e-m)_n(\llambda)_n},
\end{multline}
where $\llambda$ is the $m$-vector of the negated roots of the polynomial 
 $$
\hat{Q}_m(t)=[\hat{T}_{m}(c,e;1+a-b)\hat{W}_{m}](t),
 $$
 where $\hat{W}_m$ is defined in  \eqref{eq:hatW-defined}. 
\end{corollary}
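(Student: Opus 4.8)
The plan is to derive Corollary~\ref{cr:genDougall7F6} as a one-line specialization of the generalized Whipple transformation in Lemma~\ref{lm:eq:genWhipple7F6}, followed by a single invocation of the extended Saalsch\"{u}tz identity of Lemma~\ref{lm:Saal_extended}. First I would start from \eqref{eq:genWhipple7F6} and impose the relation $1+a-b-d-m=c+e-a-n$, which is nothing but the hypothesis $1+2a=c+e+b+d+m-n$ rewritten. Under this relation the upper parameter $1+a-b-d-m$ on the right-hand side of \eqref{eq:genWhipple7F6} coincides with the lower parameter $c+e-a-n$, so this pair cancels and the perturbed hypergeometric factor collapses to
$$
F\left(\begin{matrix}c,e,\etta+1,-n\\1+a-b,1+a-d,\etta\end{matrix}\right)=F\left(\begin{matrix}c,e,-n\\1+a-b,1+a-d\end{matrix}\,\bigg\vert\,\hat{W}_m\,\bigg\vert\,1\right),
$$
where the upper/lower pair $\etta+1,\etta$ encodes the perturbation polynomial $\hat{W}_m$ defined in \eqref{eq:hatW-defined}, whose negated roots are $\etta$.

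Next I would verify that this residual series is Saalsch\"{u}tzian. Computing the parametric excess, the two base lower parameters together with the $m$ perturbation pairs $\eta_i+1$ over $\eta_i$ (each contributing $1$) give the excess $2+2a-b-d-c-e-m+n$, which equals $1$ exactly under the hypothesis; this is the ``parametric excess equals $1$'' assertion. I would then match the residual series to the left-hand side of \eqref{eq:Saal_extended} by setting $\alpha=c$, $\beta=e$ and $\gamma=1+a-b$. The crucial compatibility check is that the remaining lower parameter $1+a-d$ must coincide with the prescribed $1+\alpha+\beta-\gamma+m-n$; substituting gives $c+e-a+b+m-n$, and the hypothesis rearranges to precisely $1+a-d=c+e-a+b+m-n$. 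Since $\hat{W}_m$ may be presented in the $(\f+\m)/(\f)$ form with $\f=\etta$ and $\m=(1,\ldots,1)$, Lemma~\ref{lm:Saal_extended} applies with perturbation polynomial $\hat{W}_m$ and evaluates the residual series as
$$
\frac{(1+a-b-c-m)_n(1+a-b-e-m)_n(\llambda+1)_n}{(1+a-b)_n(1+a-b-c-e-m)_n(\llambda)_n},
$$
where $\llambda$ is the vector of negated roots of $\hat{Q}_m=\hat{T}_m(c,e;1+a-b)\hat{W}_m$, exactly as stated in the corollary.

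Finally I would multiply this closed form by the Whipple prefactor $(1+a)_n(1+a-c-e)_n/[(1+a-c)_n(1+a-e)_n]$ carried over from \eqref{eq:genWhipple7F6}; collecting the five resulting Pochhammer factors reproduces the right-hand side of \eqref{eq:genDougall7F6}. The main obstacle I anticipate is purely in the bookkeeping: confirming, via the single hypothesis, the two parameter identifications that drive the specialization---namely the cancellation $1+a-b-d-m=c+e-a-n$ and the Saalsch\"{u}tz-matching $1+a-d=1+\alpha+\beta-\gamma+m-n$---and then correctly tracking the composition of second Miller-Paris operators so that the characteristic polynomial emerges as $\hat{T}_m(c,e;1+a-b)$ acting on $\hat{W}_m$ rather than on a bare monomial. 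No new analytic input beyond Lemmas~\ref{lm:eq:genWhipple7F6} and~\ref{lm:Saal_extended} is needed; the entire content lies in the parameter matching and this operator composition.
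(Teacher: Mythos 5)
Your proposal is correct and follows essentially the same route as the paper: the author likewise specializes \eqref{eq:genWhipple7F6} by setting $1+a-b-d-m=c+e-a-n$, observes that the resulting series on the right-hand side is Saalsch\"{u}tzian, and evaluates it via \eqref{eq:Saal_extended} with the perturbation polynomial $\hat{W}_m$, yielding the characteristic polynomial $\hat{T}_m(c,e;1+a-b)\hat{W}_m$. Your parameter bookkeeping (the cancellation, the excess computation, and the matching $1+a-d=1+\alpha+\beta-\gamma+m-n$) simply makes explicit what the paper leaves implicit.
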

The above corollary is a generalization of Dougall’s summation theorem for  terminating very well-poised $2$-balanced ${}_7F_6(1)$, see \cite[Theorem~3.5.1]{AAR}.

Our final lemma will be derived from Corollary~\ref{cr:genDougall7F6}. It is the key ingredient of the proof of Theorem~\ref{th:ClausenPerturbed}. It gives the sum for polynomially perturbed well-poised Saalsch\"{u}tzian ${}_4F_3$. 

\begin{lemma}\label{lm:perturbedSaal4F3}
Suppose $k\in\N_0$, $m=m_1+\cdots+m_r$, $m_i\in\N$, and no denominator parameters is a non-positive integer. Then
\begin{multline}\label{eq:perturbedSaal4F3}
F\left(\begin{matrix}
-k, b, c, 1/2-k-c-b-m, 1-k-\f,   \f+\m  \\
1-k-b, 1-k-c, 1/2+c+b+m, 1-k-\f-\m, \f
\end{matrix}\right)
\\
=\frac{(1/2)_{m}(c+b+1/2)_{m}(2c)_{k}(2b)_{k}(c+b)_{k}}{(b+1/2)_{m}(c+1/2)_{m}(c)_{k}(b)_k(2c+2b)_{k}}\frac{(c+k/2)_{m}}{(c+b+k/2)_{m}(\f)_{\m}}\hat{R}(-b),
\end{multline}
where $\hat{R}$  is a rational function of $k$ given by \eqref{eq:hatRfinal}.
\end{lemma}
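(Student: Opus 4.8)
The plan is to obtain \eqref{eq:perturbedSaal4F3} as a specialization of the generalized Dougall summation in Corollary~\ref{cr:genDougall7F6}. First I would note that the left-hand side of \eqref{eq:perturbedSaal4F3} is a terminating, well-poised, polynomially perturbed ${}_4F_3$ at unit argument whose unperturbed core (the case $m=0$) is exactly the coefficient form of Clausen's identity \eqref{eq:Clausen}, namely ${}_4F_3(-k,b,c,1/2-k-c-b;\,1-k-b,1-k-c,1/2+c+b)=(2c)_k(2b)_k(c+b)_k/[(c)_k(b)_k(2c+2b)_k]$, obtained by equating the Cauchy-product coefficients of the squared ${}_2F_1$. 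The first concrete step is to match the integral-parameter-difference pairs of the two sides: the perturbation block $(1-k-\f,\,\f+\m)$ over $(1-k-\f-\m,\,\f)$ in \eqref{eq:perturbedSaal4F3} must coincide with the block $(a-\f+1,\,\f+\m)$ over $(1+a-\f-\m,\,\f)$ of Corollary~\ref{cr:genDougall7F6}, which forces $a=-k$; the well-poised pairs $(b,1-k-b)$ and $(c,1-k-c)$ then carry over verbatim.

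With $a=-k$ the very-well-poised pair of Corollary~\ref{cr:genDougall7F6} becomes $(1+a/2,a/2)=(1-k/2,-k/2)$, whereas the target has no such pair but instead the doubled parameters $2b,2c,c+b,2c+2b$ and the half-integer shifts $1/2+c+b+m$ and $1/2-k-c-b-m$. This is the signature of a quadratic (Clausen-type) reduction, and indeed every constituent operator in \eqref{eq:hatRfinal} carries the halving $k/2$ and the context $\gamma=1-k/2=1+a/2$, i.e.\ the very-well-poised numerator $1+a/2$ reappears as the operator context. Accordingly I would fix the two remaining free parameters $d,e$ of Corollary~\ref{cr:genDougall7F6} through the half-integer relation that collapses the very-well-poised ${}_7F_6$ onto the doubled-parameter well-poised ${}_4F_3$; the parameter doubling on the right is then produced by Legendre's duplication formula $\Gamma(2z)=2^{2z-1}\pi^{-1/2}\Gamma(z)\Gamma(z+1/2)$ applied to the closed form of \eqref{eq:genDougall7F6}, yielding $(2c)_k,(2b)_k,(c+b)_k,(2c+2b)_k$ together with $(1/2)_m,(c+b+1/2)_m,(c+1/2)_m,(b+1/2)_m$ and the quotient $(c+k/2)_m/(c+b+k/2)_m$. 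The rewriting of the prefactor of \eqref{eq:ClasenRational} as Pochhammer symbols indexed by $k$, recorded in the Remark after Theorem~\ref{th:ClausenPerturbed}, is precisely the bridge I would use to carry out this matching.

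It remains to track the characteristic polynomial. Corollary~\ref{cr:genDougall7F6} delivers the vector $\llambda$ of negated roots of $\hat{T}_m(c,e;1+a-b)\hat{W}_m$, with $\hat{W}_m$ the telescoping operator product \eqref{eq:hatW-defined}. Under the reduction the context parameter $1+a-b$ is replaced by $1+a/2=1-k/2$, and the arguments of the constituent operators acquire exactly the halvings $c+k/2$ and $f_j+k/2-M_{j-1}$ displayed in \eqref{eq:hatRfinal}. Using the factorization $\hat{T}_m(\alpha,\beta;\gamma)=T_m(\alpha;\gamma)T_m(\beta;\gamma)$ and its permutation symmetry from \eqref{eq:B-decomposed}, I would re-express the $\hat{W}_m$ composition in this halved context, reorganize it into the product \eqref{eq:hatRfinal}, and evaluate the resulting image polynomial at $t=-b$ (the value transferred from $\hat{R}(-b)$) to recover the stated rational function $\hat{R}$.

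The main obstacle is twofold. The conceptual difficulty is pinning down the exact quadratic specialization, equivalently the quadratic transformation of the very-well-poised ${}_7F_6$, that makes the reduction valid for every $k\in\N_0$: a naive cancellation of the very-well-poised pair against $d=a/2$ forces the Dougall length to be non-integral (proportional to $k/2$), so the correct route must genuinely halve the summation index rather than merely cancel parameters, and this is what produces both the factor-two phenomenon visible already at $k=1$ and the $k/2$ throughout \eqref{eq:hatRfinal}. The technical difficulty is the operator bookkeeping needed to show that the characteristic polynomial supplied by Corollary~\ref{cr:genDougall7F6} coincides, after passing to the context $\gamma=1-k/2$, with the composition \eqref{eq:hatRfinal}. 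By contrast, the accompanying rearrangement of Pochhammer symbols and the duplication-formula simplification of the closed form, although lengthy, are routine.
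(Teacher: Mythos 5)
Your proposal correctly identifies Corollary~\ref{cr:genDougall7F6} as the source of the identity and correctly diagnoses the obstruction: with $a=-k$ there is no choice of the remaining parameters of \eqref{eq:genDougall7F6} that cancels the very-well-poised pair $(1+a/2,a/2)$ while keeping the terminating index $n$ a nonnegative integer. But at exactly this point you stop, conjecturing that the resolution is some quadratic transformation that ``genuinely halves the summation index,'' and admitting you cannot pin it down. That missing step is the entire content of the paper's proof, and it is not a quadratic transformation at all. The paper first sets the $b$ of \eqref{eq:genDougall7F6} equal to $a/2$ and its $e$ equal to $(1+a)/2$ --- both are free parameters, so this is legitimate, and it cancels the very-well-poised pair and the $e$-pair while $a$ stays generic and $n\in\N_0$; the balancing condition then fixes $d$. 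The crucial move is the subsequent \emph{role swap}: after rewriting the closed form in gamma functions, one substitutes $a\to -k$ and $-n\to b$, so that the series now terminates because of the numerator parameter $-k$ and the former terminating index becomes the free parameter $b$ of the Lemma. Thus the $b$ of the Lemma is \emph{not} the $b$ of Corollary~\ref{cr:genDougall7F6}, as your matching assumes; it is the analytic continuation of $-n$. The substitution is then justified by a rationality argument: for fixed $k\in\N_0$ both sides are rational functions of $b$ that agree at the infinitely many points $b=0,-1,-2,\ldots$ (where the Dougall identity applies), hence they agree identically. The Legendre duplication and reflection formulas you invoke do appear, but only in the routine simplification of the continued right-hand side into the form \eqref{eq:perturbedSaal4F3}.

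Without this role swap and continuation argument your plan cannot close: every direct specialization of \eqref{eq:genDougall7F6} leaves the sum terminating in the integer $n$, whereas the target \eqref{eq:perturbedSaal4F3} must hold for arbitrary, generically non-integer $b$. So the gap is not bookkeeping but the mechanism converting a summation theorem terminating in one parameter into one terminating in a different parameter. Your reading of the operator structure, on the other hand, is essentially correct and needs no separate ``halved-context'' derivation: the operator product \eqref{eq:hatRfinal} is exactly $\hat{T}_{m}(c,e;1+a-b)\hat{W}_m$ with $\hat{W}_m$ from \eqref{eq:hatW-defined}, evaluated under the substitutions $b=a/2$, $e=(1+a)/2$, $a=-k$, $d=1/2-k-m-c-b$, with $-b$ (the continued $-n$) as the point of evaluation.
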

\begin{proof}
Set $b=a/2$ in \eqref{eq:genDougall7F6} to get 
\begin{multline*}
F\left(\begin{matrix}
a, c, d, e, a-\f+1,   \f+\m, -n  \\
1+a-c, 1+a-d, 1+a-e, 1+a-\f-\m, \f,  1+a+n
\end{matrix}\right)
\\
=\frac{(1+a)_{n}(1+a-c-e)_{n}(1+a/2-c-m)_n(1+a/2-e-m)_n}{(1+a-c)_{n}(1+a-e)_{n}(1+a/2)_n(1+a/2-c-e-m)_n}\hat{Q}_m(n),
\end{multline*}
where the balancing condition becomes $1+3a/2-d-m=c+e-n$ and 
\begin{multline*}
\hat{Q}_m(t)=\frac{1}{(\f)_{\m}}\hat{T}_{m}(c,e;1+a/2)\hat{T}_{M_r}(d,f_r-a/2-M_{r-1};1+a/2)
(f_r+\cdot)_{m_r}
\\
\hat{T}_{M_{r-1}}(1+a-f_{r},f_{r-1}-a/2-M_{r-2};1+a/2)
(f_{r-1}+\cdot)_{m_{r-1}}
\\
\hat{T}_{M_{r-2}}(1+a-f_{r-1},f_{r-2}-a/2-M_{r-3};1+a/2)
\cdots
\hat{T}_{m_1}(1+a-f_2,f_1-a/2;1+a/2)(f_1+\cdot)_{m_1}.
\end{multline*}
Next, set $e=(1+a)/2$ (so that $e=1+a-e$). The balancing condition now becomes  $d=1/2+a-m-c+n$ and the summation formula reduces to
\begin{multline*}
F\left(\begin{matrix}
a, c, d, a-\f+1,   \f+\m, -n  \\
1+a-c, 1+a-d, 1+a-\f-\m, \f,  1+a+n
\end{matrix}\right)
\\
=\frac{4^{n}(1/2+a/2-c)_{n}(1+a/2-c-m)_n(1/2-m)_n}{(1+a-c)_{n}(a+n+1)_{n}(1/2-c-m)_n}\hat{Q}_m(n),
\end{multline*}
where we used
$$
\frac{(a+1)_{n}}{(a/2+1/2)_{n}(a/2+1)_{n}}=\frac{4^n}{(a+n+1)_{n}}.
$$
We can rewrite the right hand side in terms of gamma functions as follows:
\begin{align*}
&\frac{4^{n}\Gamma(1+a-c)\Gamma(a+n+1)\Gamma(1/2+a/2-c+n)\Gamma(1/2-c-m)}{\Gamma(1/2+a/2-c)\Gamma(1+a/2-c-m)\Gamma(1/2-m)\Gamma(1+a-c+n)}
\\
&\times\frac{\Gamma(1+a/2-c-m+n)\Gamma(1/2-m+n)}{\Gamma(a+2n+1)\Gamma(1/2-c-m+n)}\hat{Q}_m(n).
\end{align*}
If we now replace $a$ by $-k\in\-\N_0$ and $-n$ by arbitrary $b$ in the above expression, we will get
\begin{align*}
&\frac{\Gamma(1-k-c)\Gamma(1-k-b)\Gamma(1/2-k/2-c-b)\Gamma(1/2-c-m)}{4^{b}\Gamma(1/2-k/2-c)\Gamma(1-k/2-c-m)\Gamma(1/2-m)\Gamma(1-k-c-b)}
\\
&\times\frac{\Gamma(1-k/2-c-b-m)\Gamma(1/2-b-m)}{\Gamma(1-k-2b)\Gamma(1/2-c-b-m)}\hat{Q}_m(-b).
\end{align*}
The balancing condition takes the form  $d=1/2-k-m-c-b$. Using the recurrence relation and Legendre's duplication formula for the gamma function we obtain
$$
\Gamma(1/2-A-k/2)\Gamma(1-A-k/2-m)=2^{2A+k}\sqrt{\pi}\frac{\Gamma(1-2A-k)}{(A+k/2)_{m}},
$$
so that the above expression becomes
$$
\frac{\Gamma(1/2-b-m)\Gamma(1/2-c-m)(c+k/2)_{m}\Gamma(1-k-c)\Gamma(1-k-b)\Gamma(1-k-2c-2b)\hat{Q}_m(-b)}{\Gamma(1/2-m)\Gamma(1/2-c-b-m)(c+b+k/2)_{m}\Gamma(1-k-2c)\Gamma(1-k-2b)\Gamma(1-k-c-b)}.
$$
Further, we have:
$$
\frac{\Gamma(1/2-b-m)\Gamma(1/2-c-m)}{\Gamma(1/2-m)\Gamma(1/2-c-b-m)}=
\frac{\sin(\pi/2)\sin(\pi(c+b+1/2))\Gamma(1/2+m)\Gamma(c+b+1/2+m)}{\sin(\pi(b+1/2))\sin(\pi(c+1/2))\Gamma(b+1/2+m)\Gamma(c+1/2+m)},
$$
and
$$
\frac{\Gamma(1-k-A)}{\Gamma(1-k-2A)}=2\cos(\pi{A})\frac{\Gamma(2A+k)}{\Gamma(A+k)}.
$$
Substituting these formulas, simplifying using the duplication formula  again and recalling that $d=1/2-k-m-c-b$ we finally obtain \eqref{eq:perturbedSaal4F3} as a formal identity. 
 To prove that this is indeed a true equality note that both sides are rational functions of $b$ and which coincide for infinite number of values $b=0,-1,-2\ldots$, implying that they must be identical. It remains to define $\hat{R}(t)=(\f)_{\m}\hat{Q}_{m}(t)$.
\end{proof}

\remark We can put $a=-k$, $-n=b$ in the first equality of the above proof and apply the same manipulations to the resulting expression as in the above proof. This leads to the following summation formula :
\begin{align*}
&F\left(\begin{matrix}
-k, b, c, e, 1-3k/2-c-e-b-m, 1-k-\f,  \f+\m  \\
1-k-b, 1-k-c, 1-k-e, k/2+c+e+b+m, 1-k-\f-\m, \f
\end{matrix}\right)=
\\
&\frac{(1/2)_{l}(c)_{m}(e)_{m}(b+c+e)_{m}(b)_{l}(b+c)_{k}(c+e)_{k}(b+e)_{k}(c+m)_{l}(e+m)_{l}(b+c+e+m)_{l}}{2^{-k}(b+c)_{m}(c+e)_{m}(b+e)_{m}(b)_{k}(c)_{k}(e)_{k}(b+c+e)_{k}(c+b+m)_{l}(c+e+m)_{l}(e+b+m)_{l}}\hat{Q}_m(-b),
\\&~~~~~~~\text{ if}~k=2l~\text{is even,}
\\
&0,~~~~\text{ if}~k=2l+1~\text{is odd.}
\end{align*}
Here $\hat{Q}_m$ is the same as given in the course of the proof with $d$ replaced by $1-3k/2-c-e-b-m$ in accordance with the balancing condition.  It is not obvious whether this formula leads to any product identity generalizing \eqref{eq:ClausenPerturbed}. 

\smallskip

We are now ready to complete \emph{the proof of Theorem~\ref{th:ClausenPerturbed}}. 
Using $(\alpha)_{k-j}=(-1)^{j}(\alpha)_{k}/(1-\alpha-k)_{j}$ by the Cauchy product we have
\begin{multline*}
\Bigg[F\left(\begin{matrix}
b, c, \f+\m  \\
1/2+c+b+m, \f
\end{matrix}\bigg\vert\:x\right)\bigg]^2
\\
=\sum_{k=0}^{\infty}x^k\sum_{j=0}^{k}\frac{(b)_{j}(c)_{j}(\f+\m)_{j}(b)_{k-j}(c)_{k-j}(\f+\m)_{k-j}}{(1/2+c+b+m)_{j}(\f)_{j}(1/2+c+b+m)_{k-j}(\f)_{k-j}j!(k-j)!}
\\
=\sum_{k=0}^{\infty}\frac{(b)_{k}(c)_{k}(\f+\m)_{k}x^k}{(1/2+c+b+m)_{k}(\f)_{k}k!}
F\left(\begin{matrix}
-k, b, c, 1/2-k-c-b-m, 1-k-\f,   \f+\m  \\
1-k-b, 1-k-c, 1/2+c+b+m, 1-k-\f-\m, \f
\end{matrix}\right)
\\
=\frac{(1/2)_{m}(c+b+1/2)_{m}}{(b+1/2)_{m}(c+1/2)_{m}}\sum_{k=0}^{\infty}\frac{(2c)_{k}(2b)_{k}(c+b)_{k}x^k}{(1/2+c+b+m)_{k}(2c+2b)_{k}k!}
\frac{(\f+\m)_{k}(c+k/2)_{m}}{(\f)_{k}(c+b+k/2)_{m}(\f)_{\m}}\hat{R}(-b)
\\
=\frac{(1/2)_{m}(c+b+1/2)_{m}}{(b+1/2)_{m}(c+1/2)_{m}}
F\left(\begin{matrix}
2b, 2c, c+b \\
1/2+c+b+m, 2c+2b
\end{matrix}\,\bigg\vert\,R\,\bigg\vert\:x\right),
\end{multline*}
where we applied \eqref{eq:perturbedSaal4F3} and
$R(k)$ as given by \eqref{eq:ClasenRational}
is a rational function of $k$ since the operator $\hat{T}_{M_j}$ preserves rationality.  $\hfill\square$

\begin{corollary}\label{cr:oneunitshift}
For the values of parameters such that both sides make sense, the following product formula holds
\begin{equation}\label{eq:ClausenPerturbed1}
\Bigg[{}_{3}F_{2}\!\left(\begin{matrix}c,b,f+1\\c+b+3/2,f\end{matrix}\:\bigg\vert\,x\right)\Bigg]^2={}_{6}F_{5}\!\left(\begin{matrix}2c,2b,c+b,\lambda_1+1,\lambda_2+1,\lambda_3+1\\c+b+3/2,2c+2b+2,\lambda_1,\lambda_2,\lambda_3\end{matrix}\:\bigg\vert\,x\right),
\end{equation}
where $\lambda_1,\lambda_2,\lambda_3$ are the roots of the characteristic polynomial
\begin{equation}\label{eq:CharacterClausen1}
P_3(t)=f(f-t)(t^2-(8cb+4c+4b+1)t+2(2c+1)(2b+1)(c+b))+2cbt(t+1)(c+b-t+1/2) 
\end{equation}
of degree $3$.
\end{corollary}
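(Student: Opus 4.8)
The plan is to read off Corollary~\ref{cr:oneunitshift} as the single-parameter, single-shift specialization $r=1$, $\f=(f)$, $\m=(1)$ (so $m=1$) of Theorem~\ref{th:ClausenPerturbed}, and then to make the rational function $R$ of \eqref{eq:ClasenRational} completely explicit. In this case the left-hand side is precisely $[{}_3F_2(c,b,f+1;c+b+3/2,f\,|\,x)]^2$, the prefactor in \eqref{eq:ClausenPerturbed} collapses to $\tfrac{(1/2)(c+b+1/2)}{(b+1/2)(c+1/2)}$, and \eqref{eq:hatRfinal} reduces to the two-fold composition
\[
\hat R(t)=\hat T_1\!\left(c,\tfrac{1-k}{2};\tfrac{2-k}{2}\right)\hat T_1\!\left(-\tfrac12-k-c-b,\,f+\tfrac k2;\,\tfrac{2-k}{2}\right)(f+\cdot)_1 .
\]
So everything hinges on evaluating a composition of two second Miller--Paris operators of degree one.

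First I would record a closed form for the degree-one operator. Using the factorization \eqref{eq:B-decomposed}, $\hat T_1(a,b;c)=T_1(a;c)T_1(b;c)$, together with the interpolation characterization of $T_1$ (the image of a polynomial $F$ is the linear polynomial through the nodes $F(0)$ and $\tfrac{cF(0)-bF(1)}{c-b-1}$ at $t=0,1$), a short computation shows that $\hat T_1(a,b;c)F$ is the linear polynomial taking the value $F(0)$ at $t=0$ and
\[
\frac{c(c-a-b-1)F(0)+ab\,F(1)}{(c-a-1)(c-b-1)}
\]
at $t=1$. Because every polynomial involved is linear, I only ever need its values at $0$ and $1$, and the evaluation at $t=-b$ required by \eqref{eq:ClasenRational} comes from linearity as $\hat R(-b)=(1+b)\hat R(0)-b\,\hat R(1)$, with $\hat R(0)=f$.

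Next I would apply this formula twice --- first with $(a,b;c)=(-\tfrac12-k-c-b,\,f+\tfrac k2;\,\tfrac{2-k}{2})$ to the input $(f+\cdot)_1$ (values $f$ and $f+1$), then with $(a,b;c)=(c,\,\tfrac{1-k}{2};\,\tfrac{2-k}{2})$ --- and assemble $\hat R(-b)$ as a single fraction in $k$. Writing $S=2c+2b+1$, the denominators organize themselves into $(2c+k)(f+k)(S+k)$ and the numerator comes out a cubic in $k$; the claim is that $\hat R(-b)=\tfrac{P_3(-k)}{(2c+k)(f+k)(S+k)}$ with $P_3$ as in \eqref{eq:CharacterClausen1}. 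I would verify this cubic identity by comparing its four coefficients, the leading one being $f+2cb$ and the constant term being $P_3(0)=2f^2(c+b)(2c+1)(2b+1)$. This is the one genuinely computational step and the main obstacle; the nuisance is the half-integer parameters $\tfrac{1-k}{2}$, $\tfrac{2-k}{2}$, $f+\tfrac k2$ that must be cleared throughout.

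Once this is established (recall $\hat R=(\f)_{\m}\hat Q_m=f\hat Q_1$), I would substitute into \eqref{eq:ClasenRational}: the factors $(f+k)$ and $(c+k/2)=\tfrac12(2c+k)$ cancel against the denominator, leaving $R(k)=\dfrac{P_3(-k)}{f^2(2c+2b+k)(2c+2b+1+k)}$. Finally I would convert to Pochhammer ratios in $k$: the pair $(2c+2b+k)(2c+2b+1+k)$ combines with the base factor $(2c+2b)_k$ of the series on the right of \eqref{eq:ClausenPerturbed} to produce $(2c+2b+2)_k$ (up to the scalar $(2c+2b)(2c+2b+1)$), while $P_3(-k)=P_3(0)\prod_{i=1}^{3}\tfrac{(\lambda_i+1)_k}{(\lambda_i)_k}$ supplies the three parameter pairs $\lambda_i+1$ over $\lambda_i$. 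A last check is that all scalar constants --- the prefactor $\tfrac{(1/2)(c+b+1/2)}{(b+1/2)(c+1/2)}$, the $f^2(2c+2b)(2c+2b+1)$ from clearing denominators, and $P_3(0)$ --- collapse to $1$; this follows from $P_3(0)=2f^2(c+b)(2c+1)(2b+1)$ together with the elementary identity $(2c+1)(2b+1)=4bc+2c+2b+1$, so that the right-hand side is exactly the stated ${}_6F_5$.
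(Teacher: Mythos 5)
Your proposal is correct and follows essentially the same route as the paper's proof: specialize Theorem~\ref{th:ClausenPerturbed} to $r=1$, $m_1=m=1$, evaluate the composition $\hat{T}_1(c,1/2-k/2;1-k/2)\hat{T}_1(-1/2-k-c-b,f+k/2;1-k/2)(f+\cdot)$ explicitly, and identify the resulting rational function $R(k)=P_3(-k)/\bigl[f^2(2c+2b+k)(2c+2b+k+1)\bigr]$ so that the right-hand side of \eqref{eq:ClausenPerturbed} becomes the stated ${}_6F_5$. The only difference is presentational: where the paper invokes ``any of the formulas \eqref{eq:hatQLagrange}, \eqref{eq:hatQMP}, \eqref{eq:hatQKP}, \eqref{eq:hatQKPNew} and simplifying,'' you compute the degree-one operator through its interpolation values at $t=0,1$ combined with the factorization \eqref{eq:B-decomposed} and linearity, and you make explicit the final bookkeeping (absorbing the denominator into $(2c+2b+2)_k$ and checking that the scalar prefactor collapses to $1$) that the paper compresses into ``substituting we arrive at the claim.''
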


\remark  Denoting $\tilde{P}_3(t):=P_3(-t)$ we can also write the right-hand side of the above identity as
$$
\frac{1}{\tilde{P}_3(0)}\!F\!\left(\begin{matrix}2c,2b,c+b\\c+b+3/2,2c+2b+2\end{matrix}\:\bigg\vert \tilde{P}_3\:\bigg\vert x\right),~\text{where}~\tilde{P}_3(0)=2f^2(2c+1)(2b+1)(c+b).
$$
\begin{proof}
We have $m_1=m=1$, $r=1$, $f_1=f$, so that by \eqref{eq:ClasenRational}, \eqref{eq:hatRfinal} 
$$
R(k)=\frac{(f+k)(2c+k)}{f^2(2c+2b+k)}\hat{R}(-b),
$$
where 
$$
\hat{R}(-b)=[\hat{T}_1(c,1/2-k/2;1-k/2)\hat{T}_1(-1/2-k-c-b,f+k/2;1-k/2)(f+\cdot)](-b).
$$
Using any of the formulas \eqref{eq:hatQLagrange}, \eqref{eq:hatQMP}, \eqref{eq:hatQKP}, \eqref{eq:hatQKPNew} and simplifying we get 
$$
R(k)\!=\!\frac{f(f+k)(k^2+(8cb+4c+4b+1)k+2(2c+1)(2b+1)(c+b))\!+\!2cbk(k\!-\!1)(c\!+\!b\!+\!k\!+\!1/2)}{f^2(2b+2c+k)(2b+2c+k+1)}.
$$
Substituting we arrive at the claim.
\end{proof}

In \cite[Lemma~2.1]{KPITSF2017} we demonstrated how product formulas lead to evaluation of certain Laplace convolution integrals. In the following corollary we illustrate it on the product formula  \eqref{eq:ClausenPerturbed1} extending \cite[Example~2.3]{KPITSF2017}. Certainly, a similar evaluation can be obtained for the general product formula \eqref{eq:ClausenPerturbed}.
\begin{corollary}
Write $\llambda=(\lambda_1,\lambda_2,\lambda_3)$ for the roots of $P_3$ defined in \eqref{eq:CharacterClausen1}. Then 
\begin{align*}
\frac{cb(f+1)}{(2c+2b+3)f}\int_{0}^{t}{}_{3}F_{3}\!&\left(\begin{matrix}c+1,b+1,f+2\\2,c+b+5/2,f+1\end{matrix}\:\bigg\vert\,t-u\right)\!
{}_{3}F_{3}\!\left(\begin{matrix}c+1,b+1,f+2\\2,c+b+5/2,f+1\end{matrix}\:\bigg\vert\,u\right)\!du
\\
=\frac{f(c+b)(\llambda+1)_1}{(f+1)(c+b+1)(\llambda)_1}&{}_{6}F_{6}\!\left(\begin{matrix}2c+1,2b+1,c+b+1,\lambda_1+2,\lambda_2+2,\lambda_3+2\\2,c+b+5/2,2c+2b+3,\lambda_1+1,\lambda_2+1,\lambda_3+1\end{matrix}\:\bigg\vert\,t\right)
\\
&-{}_{3}F_{3}\!\left(\begin{matrix}c+1,b+1,f+2\\2,c+b+5/2,f+1\end{matrix}\:\bigg\vert\,t\right).
\end{align*}
\end{corollary}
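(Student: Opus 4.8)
The plan is to turn the product identity \eqref{eq:ClausenPerturbed1} into a Laplace convolution by the device of \cite[Lemma~2.1]{KPITSF2017}, whose engine is the Beta integral $\int_0^t(t-u)^m u^n\,du=m!\,n!\,t^{m+n+1}/(m+n+1)!$. Write the squared factor on the left of \eqref{eq:ClausenPerturbed1} as $[f(x)]^2$ with $f(x)={}_3F_2(\ldots\,|\,x)=\sum_{k\ge0}a_kx^k$ and $a_k=(c)_k(b)_k(f+1)_k/[(c+b+3/2)_k(f)_k\,k!]$; then \eqref{eq:ClausenPerturbed1} reads $[f(x)]^2=\sum_{k\ge0}c_kx^k$, where $c_k$ are the coefficients of the ${}_6F_5$ on the right. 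Because a ${}_3F_3$ is entire, every interchange of summation and integration below is legitimate for all $t$.

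First I would identify the integrand as a Borel-type shift of $f$: comparing Pochhammer symbols via $(c)_{n+1}=c(c+1)_n$, $(f+1)_{n+1}=(f+1)(f+2)_n$, $(n+1)!=(2)_n$, and the like, the coefficient of $t^n$ in the corollary's ${}_3F_3$, call this function $g$, equals $C\,a_{n+1}/n!$ with $C=(c+b+3/2)f/[cb(f+1)]$. Substituting $g_n=C\,a_{n+1}/n!$ into the Beta integral and reindexing by $j=m+1$, $l=n+1$ gives
\begin{equation*}
\int_0^t g(t-u)g(u)\,du=C^2\sum_{n\ge0}\frac{t^n}{n!}\sum_{j+l=n+1,\,j,l\ge1}a_j a_l=C^2\sum_{n\ge0}\frac{c_{n+1}-2a_{n+1}}{n!}\,t^n,
\end{equation*}
where the inner (constrained) Cauchy sum equals the full coefficient $c_{n+1}$ minus its two boundary terms $j=0$ and $l=0$, each equal to $a_{n+1}$ since $a_0=1$.

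It then remains to recognize the two series. A second application of the Pochhammer shifts to $c_{n+1}$ — in which the factor $(n+1)!=(2)_n$ manufactures the lower parameter $2$ — shows $\sum_{n\ge0}c_{n+1}t^n/n!=K\,{}_6F_6(\ldots\,|\,t)$ with precisely the ${}_6F_6$ of the corollary and $K=2cb(c+b)(\llambda+1)_1/[(c+b+3/2)(c+b+1)(\llambda)_1]$, while by the definition of $C$ one has $\sum_{n\ge0}a_{n+1}t^n/n!=C^{-1}g(t)$. Hence
\begin{equation*}
\int_0^t g(t-u)g(u)\,du=C^2K\,{}_6F_6(\ldots\,|\,t)-2C\,g(t).
\end{equation*}
Multiplying through by $\tfrac12C^{-1}=cb(f+1)/[(2c+2b+3)f]$ — where the identity $2c+2b+3=2(c+b+3/2)$ is exactly what produces the stated prefactor — collapses $\tfrac12CK$ to $f(c+b)(\llambda+1)_1/[(f+1)(c+b+1)(\llambda)_1]$ and the second term to $-g(t)=-{}_3F_3(\ldots\,|\,t)$, which is the asserted identity.

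The computation is almost entirely bookkeeping, so the chief obstacle is the disciplined tracking of the two normalizing constants $C$ and $K$ through the successive Pochhammer shifts and the final cancellation $2c+2b+3=2(c+b+3/2)$. The one point that is substantive rather than mechanical is the vanishing of the $n=0$ term, $c_1-2a_1=0$ (again because $a_0=1$): this is what lets both the $c$-series and the $a$-series be summed as genuine hypergeometric functions starting at $n=0$, and hence be displayed as the clean ${}_6F_6$ and ${}_3F_3$ appearing on the right-hand side.
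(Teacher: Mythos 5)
Your proposal is correct and is essentially the paper's own argument: the paper proves this corollary simply by invoking \cite[Lemma~2.1]{KPITSF2017}, whose content is exactly the Beta-integral convolution device you reconstruct (shift the coefficients of the ${}_3F_2$ in \eqref{eq:ClausenPerturbed1} to get the ${}_3F_3$, convolve, and apply the product formula coefficientwise). Your bookkeeping of the constants $C$, $K$, the cancellation $c_1-2a_1=0$, and the final prefactor all check out, so this is a faithful, self-contained version of the intended proof rather than a different route.
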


\begin{corollary}\label{cr:onedoubleshift}
For the values of parameters such that both sides make sense, the following product formula holds
\begin{equation}\label{eq:ClausenPerturbed2}
\Bigg[{}_{3}F_{2}\!\left(\begin{matrix}c,b,f+2\\c+b+5/2,f\end{matrix}\:\bigg\vert\,x\right)\Bigg]^2={}_{9}F_{8}\!\left(\begin{matrix}2c,2b,c+b,\llambda+1\\c+b+5/2,2c+2b+4,\llambda\end{matrix}\:\bigg\vert\,x\right),
\end{equation}
where $\llambda$ is the vector comprised of the roots of the $6$-th degree characteristic polynomial
\begin{equation}\label{eq:CharacterClausen2}
P_6(t)=A_0-A_1t+A_2t^2-A_3t^3+A_4t^4-A_5t^5+A_6t^6
\end{equation}
with coefficients given by \emph{(}where $s=b+c$, $p=bc$, $F=f(f+1)$ for brevity\emph{)} 
$$
A_0=64f^2(f+1)^2(c+1/2)_{2}(b+1/2)_{2}(c+b)_{2},
$$
\vspace*{-0.35in}
\begin{multline*}
A_1=p^2\left[8\left(4F(4f+1)-3\right)s^2+16\left(F\left(8f^2+16f+1\right)-3\right)s+2 \left(32F^2-9\right)\right]
\\
+p\big[8\left(2F(16f+5)-3\right)s^3+8s^2\left(2F\left(16 f^2+52 f+9\right)-9\right)+2s\left(2F\left(108f^2+188 f+7\right)-33\right)
\\
+2\left(2F\left(40 f^2+40 f-9\right)-9\right)\big]
+
48f\left(2f^2+3f+1\right)s^4
\\
+48F\left(2f^2+8f+3\right)s^3
+12F\left(18f^2+40f+11\right)s^2+12F\left(11f^2+17f+3\right)s+18F^2,
\end{multline*}
\vspace*{-0.35in}
\begin{multline*}
A_2=
p^2[4s^2\left(24f^2+16f+3\right)+ 
    8s(2f(16f^2+27f+8)-1)+16F\left(4f^2+12f+1\right)-15]
 \\
  + p[4s^3(32f+44f^2-1) + 4s^2\left(128 f^3+276 f^2+112 f-15\right)
 \\   
 + s(12f\left(12f^3+96 f^2+128 f+33\right)-107)  
 +2f\left(72f^3+304f^2+279f+29\right)-51]
  \\  
+3F\big[16s^4+16s^3(4f+5)+4s^2\left(6f^2+42f+29\right)+4s\left(9f^2+31f+14\right) +\left(11 f^2+23 f+6\right)\big],
\end{multline*}
\vspace*{-0.35in}
\begin{multline*}
A_3=
2p^2\left[4s^2(4f+1)+4s(4f+3)(6f+1)+8f\left(8 f^2+17 f+6\right)-1\right]
\\
+2p\big[12s^3(2f+1)+8s^2\left(22f^2+25f+4\right)+s\left(8f\left(18f^2+63f+41\right)+11\right)+8f^4+160f^3
\\
+326f^2+141f-10\big]
+96Fs^3+144s^2F(2+f)+24sF\left(f^2+10 f+10\right)+3F\left(6f^2+28f+17\right),
\end{multline*}
\vspace*{-0.25in}
\begin{multline*}
A_4=
p^2\left(96 f^2+8 (8 f+3)s+112 f+4s^2+15\right)
\\
+p\left(32 f^3+\left(196 f^2+340 f+103\right) s+246 f^2+(96 f+60)s^2+258 f+4s^3+43\right)
\\
+3f\left(f^3+14 f^2+30 f+17\right)+72 f (f+1),
\end{multline*}
\vspace*{-0.25in}
\begin{multline*}
A_5\!=\!8p^2(s+4f+2)+p\left((52f+44)s+20f^2+74f+8s^2+38\right)+24f(f+1)s+3f\left(2 f^2+9 f+7\right),
\end{multline*}
\begin{equation*}
A_6=4p^2 + (4(2+f)+4s)p +3f(1 + f).
\end{equation*}
\end{corollary}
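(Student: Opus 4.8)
The plan is to specialize the main Theorem~\ref{th:ClausenPerturbed} to the case $r=1$, $m_1=m=2$, $f_1=f$, following verbatim the strategy of Corollary~\ref{cr:oneunitshift} but with the single shift replaced by a double shift. With these choices the left-hand side of \eqref{eq:ClausenPerturbed} is the square of the ${}_{r+2}F_{r+1}={}_3F_2$ with numerator parameters $c,b,f+2$ and denominator $c+b+5/2,f$, which is exactly the left-hand side of \eqref{eq:ClausenPerturbed2}. All the labor therefore lies on the right-hand side: I must compute the rational function $R(k)$ furnished by \eqref{eq:ClasenRational}--\eqref{eq:hatRfinal}, verify that the series $F(\cdots\vert R\vert x)$ collapses into a single ${}_9F_8$, and read off the degree-$6$ characteristic polynomial together with its seven coefficients.

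For the central step I would evaluate, using the specialization of \eqref{eq:hatRfinal} at $r=1$ (so that $M_{r-1}=M_0=0$ and the operator chain reduces to a single composition),
\[
\hat{R}(-b)=\bigl[\hat{T}_2(c,1/2-k/2;1-k/2)\,\hat{T}_2(-3/2-k-c-b,\,f+k/2;\,1-k/2)\,(f+\cdot)_2\bigr](-b),
\]
by means of any of the explicit formulas \eqref{eq:hatQKP} or \eqref{eq:hatQKPNew} for the second Miller-Paris operator. Since each $\hat{T}_2$ sends a quadratic to a quadratic, the inner operator maps $(f+t)_2$ to an explicit quadratic in $t$, the outer operator maps that quadratic to another quadratic, and the entire composition is a finite, purely algebraic computation with no convergence issues; setting $t=-b$ then produces $\hat{R}(-b)$ as an explicit rational function of $k$. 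Multiplying by the prefactor in \eqref{eq:ClasenRational} and invoking the Remark following Theorem~\ref{th:ClausenPerturbed} (with $m=2$, $r=1$) to rewrite $\tfrac{(f+k)_2(c+k/2)_2}{[(f)_2]^2(c+b+k/2)_2}$ in Pochhammer symbols indexed by $k$, I would combine $R(k)$ with the base coefficient $\tfrac{(2b)_k(2c)_k(c+b)_k}{(c+b+5/2)_k(2c+2b)_k\,k!}$. The anticipated cancellations, mirroring the $m=1$ case where $2c+2b$ is shifted to $2c+2b+2$, here shift the lower parameter $2c+2b$ to $2c+2b+4$ through the factor $(c+b+k/2)_2$ in the denominator of $R$, and leave a degree-$6$ polynomial numerator whose negated roots supply the vector $\llambda$ entering as the parameter pairs $\llambda+1,\llambda$ in the ${}_9F_8$ on the right of \eqref{eq:ClausenPerturbed2}; the overall constant collapses to $1$, as confirmed by evaluating both sides at $x=0$.

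The final step is cosmetic: writing the degree-$6$ numerator as $\tilde{P}_6(t)=P_6(-t)$ and collecting powers of $t$ yields the coefficients $A_0,\dots,A_6$ displayed in \eqref{eq:CharacterClausen2}, with $A_0=\tilde{P}_6(0)$ serving as the normalization. The main obstacle is not conceptual but computational: composing the two order-$2$ operators and simplifying the product into the clean ${}_9F_8$ form generates very large intermediate expressions, and extracting all seven coefficients $A_0,\dots,A_6$ in the compact shape stated---in particular confirming the exact denominator-parameter shift $2c+2b\mapsto 2c+2b+4$ and the precise polynomial identity for each $A_j$---is a substantial symbolic manipulation best carried out and independently verified with a computer algebra system.
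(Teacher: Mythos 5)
Your proposal follows essentially the same route as the paper's own proof: specialize Theorem~\ref{th:ClausenPerturbed} to $r=1$, $m_1=m=2$, $f_1=f$, form $\hat{R}(-b)$ via the composition $\hat{T}_{2}(c,1/2-k/2;1-k/2)\hat{T}_{2}(-3/2-k-c-b,f+k/2;1-k/2)(f+\cdot)_{2}$ evaluated through the explicit formulas \eqref{eq:hatQLagrange}--\eqref{eq:hatQKPNew}, and then carry out the (large but finite) symbolic simplification with a computer algebra system to absorb the denominator into the parameter shift $2c+2b\mapsto 2c+2b+4$ and extract the degree-$6$ polynomial $P_6$ with coefficients $A_0,\dots,A_6$. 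The extra details you supply (the mechanism of the cancellation and the normalization check at $x=0$) are consistent with, and merely more explicit than, the paper's terse argument.
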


\remark  Denoting $\tilde{P}_6(t):=P_6(-t)$ we can also write the right-hand side of the above identity as
$$
\frac{1}{\tilde{P}_6(0)}\!F\!\left(\begin{matrix}2c,2b,c+b\\c+b+5/2,2c+2b+4\end{matrix}\:\bigg\vert\, \tilde{P}_6\,\bigg\vert x\right),~\text{where}~\tilde{P}_6(0)=A_0.
$$
\begin{proof}
We have $m_1=m=2$, $r=1$, $f_1=f$, so that  by \eqref{eq:ClasenRational}, \eqref{eq:hatRfinal} $$
R(k)=\frac{(f+k)_{2}(c+k/2)_{2}}{[(f)_{2}]^2(c+b+k/2)_{2}}\hat{R}(-b)
$$
with 
$$
\hat{R}(-b)=[\hat{T}_{2}(c,1/2-k/2;1-k/2)\hat{T}_{2}(-3/2-k-c-b,f+k/2;1-k/2)
(f+\cdot)_{2}](-b).
$$
Using any of the formulas \eqref{eq:hatQLagrange}, \eqref{eq:hatQMP}, \eqref{eq:hatQKP}, \eqref{eq:hatQKPNew} and simplifying with the help of \textit{Wolfram Mathematica}
 we get \eqref{eq:CharacterClausen2}.
\end{proof}

\remark For the case $r=2$, $m_1=m_2=1$, $m=2$, we have 
$$
R(k)=\frac{(f_1+k)(f_2+k)(2c+k)(2c+k+2)}{[f_1f_2]^2(2c+2b+k)(2c+2b+k+2)}\hat{R}(-b)
$$
with
\begin{multline*}
\hat{R}(-b)=\big[\hat{T}_{2}(c,1/2-k/2;1-k/2)\hat{T}_{2}(-3/2-k-c-b,f_r+k/2-1;1-k/2)
(f_2+\cdot)
\\
\hat{T}_{1}(1-k-f_2,f_1+k/2;1-k/2)
(f_{1}+\cdot)\big](-b).
\end{multline*}
Using \textit{Wolfram Mathematica} we can show that again we can absorb denominator into the parameters of the hypergeometric series leaving a characteristic polynomial of degree $8$.  It is, however, too large to be cited here. 

Several other applications of product identities can be found in the literature. Let us mention two.  In \cite{Grinshpan} the author applies them to derive weighted $L_p$ norm inequalities for the generalized hypergeometric functions. In \cite[Corollary~3.5]{MFMP2024} the authors show that each product identity corresponds an expression for the finite additive free convolution (a.k.a. Walsh composition) of hypergeometric polynomials  in terms of another  hypergeometric polynomial.

\section{Concluding remarks}
The Euler-Pfaff transformations \cite[Theorem~2.2.5]{AAR} whose polynomial perturbations are known as Miller-Paris transformations can be viewed as product identities with one factor of the form ${}_1F_0(\alpha;-;x)=(1-x)^{-\alpha}$. Theorem~\ref{th:ClausenPerturbed} shows that other product identities with both hypergeometric factors having the orders higher than ${}_1F_0$ may also possess polynomial perturbations.  This demonstration is the main goal of this paper. At the same time,  Theorem~\ref{th:ClausenPerturbed} leaves many questions unanswered. Here are some: 

\begin{enumerate}
    \item Computation of the characteristic rational function $R$ by \eqref{eq:ClasenRational}, \eqref{eq:hatRfinal} requires the knowledge of the numbers $f_j$ which are negated roots of the perturbing polynomial $F_m(t)=(\f+t)_{\m}/(\f)_{m}$. However, computation of the left hand side of \eqref{eq:ClausenPerturbed} only requires the knowledge of the values $F_m(k)$ at nonnegative integers. So it is natural to ask whether one can find a formula for the rational function $R$ which only uses the values  $F_m(k)$, similarly to \eqref{eq:hatQKPNewNew}.

    \item Examples show that the denominator of the rational function $R$ can be absorbed in the parameters of the hypergeometric function on the right hand side, and we are left with the numerator polynomial. How to prove this in general? What is the degree of this polynomial? Once these questions have been answered, one can apply Lemma~\ref{lm:interpolation} to build such characteristic polynomial in a much simpler form. 

    \item Corollary~\ref{cr:onedoubleshift} shows that the characteristic polynomial quickly becomes hardly manageable even for the quadratic perturbation. Is there a simpler way of writing the characteristic rational function $R$ or its numerator polynomial?  More generally, is it possible to change the explicit parameters of the hypergeometric series on the right hand side of \eqref{eq:ClausenPerturbed} so that $R$ takes a simpler form?
\end{enumerate}

Finally, it seems very natural to assume that there should exist polynomial perturbations of various other product identities known in the literature beyond Clausen's formula \eqref{eq:Clausen}. Finding them is an obvious topic for future research.

\end{document}